\newenvironment{manualtheorem}[1]{%
	\manualtheoreminner
}{\endmanualtheoreminner}
\newtheorem{theorem}{Theorem}[section]
\newtheorem{proposition}[theorem]{Proposition}
\newtheorem{proposition-definition}[theorem]{Proposition-Definition}
\newtheorem{lemma}[theorem]{Lemma}
\newtheorem{corollary}[theorem]{Corollary}
\theoremstyle{definition}
\newtheorem{definition}[theorem]{Definition}
\newtheorem{remark}[theorem]{Remark}
\newtheorem{question}[theorem]{Question}
\declaretheoremstyle[
%headfont=\normalfont\bfseries\itshape,
%numberwithin=theorem,
numberlike=theorem,
bodyfont=\normalfont,
spaceabove=1em plus 0.75em minus 0.25em,
spacebelow=1em plus 0.75em minus 0.25em,
qed={$\diamondsuit$},
]{myExampleStyle}
\declaretheorem[
style=myExampleStyle,
title=Example,
refname={example,examples},
Refname={Example,Examples}
]{example}
\newcommand{\val}{\operatorname{val}}
\newcommand{\trop}{\operatorname{trop}}
\newcommand{\tropbar}{\overline{\operatorname{trop}}}
\newcommand{\rk}{\operatorname{rk}}
\newcommand{\supp}{\operatorname{supp}}
\newcommand{\sign}{\operatorname{sgn}}
\newcommand{\iin}{\operatorname{in}}
\newcommand{\pr}{\mathrm{pr}}
\newcommand{\Dr}{\operatorname{Dr}}
\newcommand{\Fl}{\operatorname{Fl}}
\newcommand{\FlDr}{\operatorname{FlDr}}
\newcommand{\LFl}{\operatorname{LFl}}
\newcommand{\LFlDr}{\operatorname{LFlDr}}
\newcommand{\defemph}[1]{\emph{#1}}
\title{Linear Degenerate Tropical Flag Varieties}
\author{Alessio Borz\`i, Victoria Schleis}
\begin{document}
	
	\maketitle
	
	\begin{abstract}
		We study linear degenerations of flag varieties from the point of view of tropical geometry. We define the \defemph{linear degenerate flag Dressian} and prove a correspondence between: $(a)$ points in the linear degenerate flag Dressian, $(b)$ linear degenerate flags of valuated matroids,  $(c)$ linear degenerate flags of tropical linear spaces, and $(d)$  morphisms of valuated matroids arising from projection maps.
		
		\par\vskip\baselineskip\noindent
		\textbf{Keywords:} Tropical geometry, Combinatorics, Representation theory, Flag varieties, Matroids
		
		\noindent
		\textbf{Mathematics subject classification:} 14T15, 05B35, 16G20, 14T20, 14M15, 14N20
	\end{abstract}
	
	\section{Introduction}
	
	The Grassmannian $G(r;n)$ parametrizes $r$-dimensional linear subspaces $U$ of an $n$-dimensional $K$-vector space $V$, and can be embedded in the projective space $\mathbb{P}^{\binom{n}{r}-1}$ with equations given by the Grassmann-Pl\"{u}cker relations.
	
	A generalization of the Grassmannian is the flag variety $\Fl(\mathbf{r};n)$, parametrizing flags of linear spaces of rank $\mathbf{r}=(r_1,\dots,r_k)$, that is, sequences of linear subspaces $(U_1,\dots,U_k)$ of $V$ with $\dim(U_i) = r_i$ such that $U_1\subseteq U_2 \subseteq \dots \subseteq U_k$. Flag varieties can be embedded in a product of projective spaces and the equations of this embedding are given by the incidence Pl\"{u}cker relations in addition to the Grassmann-Pl\"{u}cker relations.
	
	Flag varieties are further generalized by \defemph{linear degenerate flag varieties} parametrizing \defemph{linear degenerate flags} of linear subspaces. These are defined as follows: fix a sequence $\mathbf{f}$ of linear maps $f_i: V \rightarrow V$ for $i \in \{ 1,\dots,k-1 \}$ and a rank vector $\mathbf{r} = (r_1,\dots,r_k)$. An $\mathbf{f}$-linear degenerate flag is a sequence of linear subspaces $(U_1,\dots,U_k)$ of $V$ with $\dim(U_i) = r_i$ such that $f_i(U_i) \subseteq U_{i+1}$ for every $i \in \{ 1,\dots,k-1 \}$. The flag variety is the $\mathbf{f}$-linear degenerate flag  variety where all the $f_i$ are equal to the identity. Further, every $\mathbf{f}$-linear degenerate flag variety can be given as a sequence of projections, using the $GL(V)$ action on $V$ for each ${U}_i$ (see \cite[Lemma 2.6]{gabriel1972irreduciblequivers}). Hence, in this paper we will restrict to projections.
	
	The tropicalization $\tropbar(G(r;n))$ of the Grassmannian (considered inside the \emph{tropical projective space} $\mathbb{P}\left(\mathbb{T}^{\binom{n}{r}}\right)$ see Section \ref{sec: prelims tropgem} for more details) parametrizes \defemph{realizable} valuated matroids of rank $r$ on $n$ elements, or equivalently \defemph{realizable} tropical linear spaces (i.e. tropicalizations of linear spaces) of dimension $r$ in $\mathbb{P}(\mathbb{T}^n)$. The object parametrizing all valuated matroids of rank $r$ on $n$ elements, or equivalently all tropical linear spaces of dimension $r$ inside $\mathbb{P}(\mathbb{T}^n)$, is a tropical prevariety $\Dr(r,n)$ called the \emph{Dressian} (see \cite{speyer2008tropicallinearspaces}). Its equations are %yet
	the tropical
	Grassmann-Pl\"{u}cker relations, and we have $\tropbar(G(r;n)) \subseteq \Dr(r,n)$. In general, this  inclusion can be strict.
	
	Similarly, the tropicalization of the flag variety $\tropbar(\Fl(\mathbf{r};n))$ parametrizes realizable valuated flag matroids, or equivalently realizable flags of tropical linear spaces (see Section \ref{sec: prelims} for precise definitions).
	
	In \cite{haque2012tropical}, Haque defined the \emph{flag Dressian} $\FlDr(\mathbf{r};n)$, the tropical prevariety of the incidence Pl\"{u}cker relations and the Grassmann-Pl\"{u}cker relations. In \cite{brandt2021tropicalflag}, Brandt, Eur and Zhang proved that $\FlDr(\mathbf{r};n)$ parametrizes valuated flag matroids or equivalently flags of tropical linear spaces (see  Theorem \ref{thm: brandt equivalence for tropical flags} and \cite[Theorem 1]{haque2012tropical}).
	
	In this paper, we define the \emph{linear degenerate flag Dressian} $\LFlDr(\mathbf{r},\mathbf{S};n)$ of rank $\mathbf{r}=(r_1,\dots,r_k)$ and degeneration type $\mathbf{S}=(S_1,\dots,S_{k-1})$. Here the $r_i$ are the dimensions of the linear spaces and the $S_i$ define the projections $\pr_{S_i}$ by setting $\pr_{S_i}(e_j) = 0$ if $j \in S_i$ and $\pr_{S_i}(e_j) = e_j$ otherwise. The variety $\LFlDr(\mathbf{r},\mathbf{S};n)$ is defined as the tropical (pre)variety 
	%with equations given by 
	of the linear degenerate incidence Pl\"{u}cker relations and the Grassmann-Pl\"{u}cker relations. The following is our main result, which is a generalization of the work of Brandt, Eur and Zhang \cite{brandt2021tropicalflag}.
	
	\begin{manualtheorem}{A}\label{thm:main_valuated_ld_flag_correspondence}
		Let $\boldsymbol{\mu}=(\mu_1,\dots,\mu_k)$ be a sequence of valuated matroids. The following statements are equivalent:
		\begin{enumerate}[label=(\alph*)]
			\item $\boldsymbol{\mu} \in \LFlDr(\mathbf{r},\mathbf{S};n)$;
			\item $\boldsymbol{\mu}$ is a linear degenerate valuated flag matroid (Definition \ref{def: ld flag matroid});
			\item $\pr_{S_i}^{\trop} \big( \tropbar(\mu_i) \big) \subseteq \tropbar(\mu_{i+1})$ for all $i \in \{ 1,\dots,k-1 \}$;
			\item every projection $\pr_{S_i}:\mu_{i+1}\rightarrow\mu_i$ is a morphism of valuated matroids (Definition \ref{def: morphism of matroid}).
		\end{enumerate}
	\end{manualtheorem}
	
	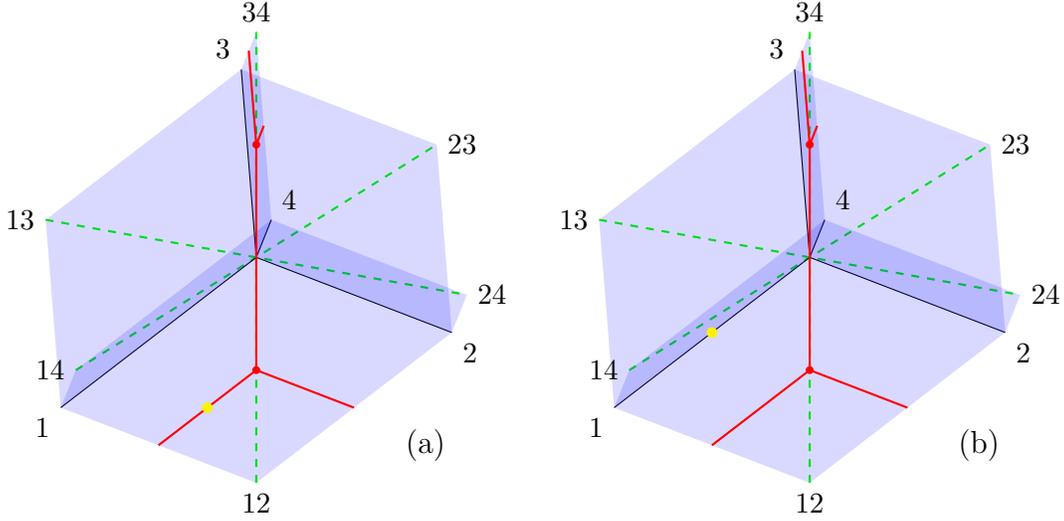
\begin{figure}[ht]
		\centering\begin{tabular}{c c c}
			\begin{tikzpicture}
				\draw[green, dashed,thick] (0,-3) -- (0,3)
				(-2.8,0.5)--(2.8,-0.5)
				(-2.4, -1.5) -- (2.4,1.5);
				\draw (0,0)--(-2.6,-2)
				(0,0) -- (-0.2, 2.5)
				(0,0) -- (0.2, 0.5)
				(0,0) -- (2.6,-1);
				
				\fill[opacity = 0.15, blue](0,0) -- (-0.2,2.5) --  	(-2.8,0.5)--(-2.6,-2)--cycle;
				\fill[opacity = 0.15, blue](0,0) -- (-0.2, 2.5) -- (0,3) -- (0.2, 0.5)	--cycle;
				\fill[opacity = 0.15, blue](0,0)  -- (0.2, 0.5) -- (2.8,-0.5) -- (2.6,-1)	--cycle;
				\fill[opacity = 0.15, blue](0,0)  --  (-0.2, 2.5)-- (2.4,1.5) -- (2.6,-1)	--cycle;
				\fill[opacity = 0.15, blue](0,-3)  --  (-2.6,-2)--(0,0)   -- (2.6,-1)	--cycle;
				\fill[opacity = 0.15, blue] (0.2, 0.5)	--(-2.4, -1.5) --(-2.6,-2)--(0,0)  	--cycle;
				\node[below left] at (-2.6,-2) {\small$1$};
				\node[above left] at (-0.2, 2.5) {\small$3$};
				\node[ above right] at (0.2, 0.5){\small$4$};
				\node[ below right] at (2.6,-1){\small$2$};
				\node[ below ] at (0,-3) {\small{$12$}};
				\node[ above ] at (0,3) {\small{$34$}};
				\node[ left ] at(-2.4, -1.5) {\small{$14$}};
				\node[ right ] at(2.4, 1.5) {\small{$23$}};
				\node[ left ] at	(-2.8,0.5) {\small{$13$}};
				\node[ right ] at(2.8,-0.5){\small{$24$}};
				\draw[red, thick] (-1.3,-2.5) -- (0,-1.5) -- (1.3,-2)  (0,-1.5) -- (0,0) -- (0, 1.5) -- (-0.1, 2.75) (0,1.5) -- (0.1,1.75);
				\fill[red] (0, -1.5) circle (1.5pt);
				\fill[red]  (0, 1.5)  circle (1.5pt);
				
				\fill[yellow] (-0.65,-2) circle (2pt);
				\node at  (2.25,-2.5){(a)};
			\end{tikzpicture} &\begin{tikzpicture}
				\draw[green,dashed, thick] (0,-3) -- (0,3)
				(-2.8,0.5)--(2.8,-0.5)
				(-2.4, -1.5) -- (2.4,1.5);
				\draw (0,0)--(-2.6,-2)
				(0,0) -- (-0.2, 2.5)
				(0,0) -- (0.2, 0.5)
				(0,0) -- (2.6,-1);
				
				\fill[opacity = 0.15, blue](0,0) -- (-0.2,2.5) --  	(-2.8,0.5)--(-2.6,-2)--cycle;
				\fill[opacity = 0.15, blue](0,0) -- (-0.2, 2.5) -- (0,3) -- (0.2, 0.5)	--cycle;
				\fill[opacity = 0.15, blue](0,0)  -- (0.2, 0.5) -- (2.8,-0.5) -- (2.6,-1)	--cycle;
				\fill[opacity = 0.15, blue](0,0)  --  (-0.2, 2.5)-- (2.4,1.5) -- (2.6,-1)	--cycle;
				\fill[opacity = 0.15, blue](0,-3)  --  (-2.6,-2)--(0,0)   -- (2.6,-1)	--cycle;
				\fill[opacity = 0.15, blue] (0.2, 0.5)	--(-2.4, -1.5) --(-2.6,-2)--(0,0)  	--cycle;
				\node[below left] at (-2.6,-2) {\small$1$};
				\node[above left] at (-0.2, 2.5) {\small$3$};
				\node[ above right] at (0.2, 0.5){\small$4$};
				\node[ below right] at (2.6,-1){\small$2$};
				\node[ below ] at (0,-3) {\small{$12$}};
				\node[ above ] at (0,3) {\small{$34$}};
				\node[ left ] at(-2.4, -1.5) {\small{$14$}};
				\node[ right ] at(2.4, 1.5) {\small{$23$}};
				\node[ left ] at	(-2.8,0.5) {\small{$13$}};
				\node[ right ] at(2.8,-0.5){\small{$24$}};
				\draw[red, thick] (-1.3,-2.5) -- (0,-1.5) -- (1.3,-2)  (0,-1.5) -- (0,0) -- (0, 1.5) -- (-0.1, 2.75) (0,1.5) -- (0.1,1.75);
				\fill[red] (0, -1.5) circle (1.5pt);
				\fill[red]  (0, 1.5)  circle (1.5pt);
				
				\fill[yellow] (-1.3,-1) circle (2pt);
				\node at  (2.25,-2.5){(b)};
			\end{tikzpicture}

		\end{tabular}
		\caption{{(a):} A tropical flag in $\trop \big( \Fl((1,2,3);4) \big)$. {(b):} A tropical linear degenerate flag in $\trop \big( \LFl \big( (1,2,3),(\{1\},\emptyset);4 \big) \big)$. Both are made of a yellow point, a red tropical line, and a blue tropical plane. The additional subdivision given by the green dashed rays on the tropical plane is useful for describing the (linear degenerate) tropical flag varieties, see Examples \ref{ex: tropfl4} and \ref{ex: lindeg 1 fl4}.}
		\label{fig: tropical flag}
	\end{figure}
	
	An analogous statement holds for the realizable case. We obtain the following correspondence:
	
	\begin{manualtheorem}{B}\label{thm:main_valuated_ld_flag_correspondence_realizable}
		Let $\boldsymbol{\mu}=(\mu_1,\dots,\mu_k)$ be a sequence of \emph{realizable} valuated matroids. The following statements are equivalent:
		%we don't need to specify a realization of them in advance
		\begin{enumerate}[label=(\alph*)]
			\item $\boldsymbol{\mu} \in \tropbar(\LFl(\mathbf{r},\mathbf{S};n))$;
			\item $\boldsymbol{\mu}$ is a \emph{realizable} linear degenerate valuated flag matroid (Definition \ref{def: ld flag matroid});
			\item there exist realizations $L_1, \dots, L_{k}$ of $\mu_1,\dots,\mu_k$ such that $\pr_{S_i}(L_i)\subseteq L_{i+1}$ for all  $i \in \{1,\dots,k-1\}$;
			\item every projection $\pr_{S_i}:\mu_{i+1}\rightarrow\mu_i$ is a \emph{realizable} morphism of valuated matroids (see Definition \ref{def: morphism of matroid}), and all valuated matroids and morphisms can be realized \emph{simultaneously}.% using the same realization of $\mu_i$, for all $i \in \{ 1,\dots,k \}$.}
		\end{enumerate}
	\end{manualtheorem}
	
	The two extreme cases of linear degenerate flag varieties are flag varieties (by setting all $S_i = \emptyset$) and products of Grassmannians (by setting all $S_i = \{1,\dots,n\}$). Hence, linear degenerate flag varieties bridge the gap between products of Grassmannians and flag varieties, see Section \ref{sec: applications} for more details.

	Linear degenerate flag varieties are of interest in representation theory. They were studied in \cite{fourier2017lineardegenerate, fourier2020lineardegenerations, feigin2011genocchi,feigin2010grassmanndegenerations, feigin2013frobeniussplittin, putz2022degenerate}. It was observed in \cite{cerulliirelli2012quiveranddegenerate} that linear degenerate flag varieties are isomorphic to quiver Grassmannians for type A quivers. Quiver Grassmannians first appeared in \cite{crawle-boevey1989quiver, schofield1992quiver}. They are varieties parametrizing subrepresentations of quiver representations. Notably, every projective variety is isomorphic to a quiver Grassmannian \cite{reineke2013projectiveisquiver}. In addition, every quiver Grassmannian can be naturally embedded in a product of Grassmannians. The equations of this embedding were described in \cite{lorscheid2019pluckerelations}.
	
	A first step towards \defemph{tropical flag varieties} was made in \cite{haque2012tropical} by Haque, showing that the flag Dressian parametrizes flags of tropical linear spaces. The tropicalization of (complete) flag varieties of length $4$ and $5$ were computed in \cite{bossinger2017toricdegenerations}. In a related work \cite{fourier2019tropicalflagvarieties}, tropical flag varieties are linked to PBW-degenerations. Further, in \cite{jell2022moduli}, in order to combinatorially describe tropicalized Fano schemes, previously introduced in \cite{lamboglia2022fano}, the authors studied the space of valuated flag matroids $(\mu_1,\mu_2)$ of rank $(r,r+1)$ where $\mu_2$ is fixed. A generalization of valuated flag matroids to tracts has been made in \cite{jarra2022flag}. Positivity for tropical flag Dressians has been studied in \cite{arkani2021positive,boretsky2022totally,boretsky2022polyhedral,joswig2021subdivisions,speyer2021positive}. % victoria Maps of tropical linear spaces have been studied in \cite{mundinger2018image}.
	
	This paper is structured as follows. In Section \ref{sec: prelims} we review the needed background knowledge and fix our notation. In Section \ref{sec: linear degenerate section} we define the linear degenerate flag Dressian and prove our main results, Theorem \ref{thm:main_valuated_ld_flag_correspondence} and Theorem \ref{thm:main_valuated_ld_flag_correspondence_realizable}. Finally, in Section \ref{sec: applications}  we give some examples, first results on relationships of linear degenerate flag varieties and Dressians with similar rank but different degeneration sets, and discuss some possible applications and future work.
	
	\paragraph{Acknowledgements}
	We thank Hannah Markwig and  Ghislain Fourier for suggesting the topic and for helpful conversations. %victoria{Hannah wanted to have only one sentence in the acknowledgements}
	We further thank Diane Maclagan for helpful conversations and useful insights. Part of the work on this project was done while the first author visited the University of T\"{u}bingen in the Fall of 2021, he would like to thank Hannah Markwig and the second author for the invitation and the institute staff for their hospitality and support. The second author further thanks Ghislain Fourier for the invitation to the RWTH Aachen in the summer of 2022 and his hospitality throughout the visit. The first author was funded through the Warwick Mathematics Institute Centre for Doctoral Training, and gratefully acknowledges the support of the University of Warwick. The second author was supported by the Deutsche Forschungsgemeinschaft (DFG, German Research
	Foundation), Project-ID 286237555, TRR 195.
	
	\section{Preliminaries}\label{sec: prelims}
	
	\subsection{Tropical Geometry}\label{sec: prelims tropgem}
	
	In this section, we review the basics of tropical geometry. Our main reference is \cite{maclagan2015book}. We follow the $\min$-convention for all tropical and matroidal operations.
	
	Set $\mathbb{T} = \mathbb{R} \cup \{ \infty \}$ and define $a \oplus b = \min\{a,b\}$ and $a \odot b = a+b$ for every $a,b \in \mathbb{T}$. Then $(\mathbb{T},\oplus,\odot)$ is a semifield, called the \defemph{tropical semifield}. The \defemph{tropical projective space} is $\mathbb{P}(\mathbb{T}^{n}) = (\mathbb{T}^{n} \setminus \{ (\infty,\dots,\infty) \}) / \mathbb{R} \mathbf{1} = (\mathbb{T}^{n} \setminus \{ (\infty,\dots,\infty) \}) / \sim$. Here we are quotienting by the equivalence relation $\mathbf{u} \sim \mathbf{v}$ if $\mathbf{u} = \mathbf{v} + c \mathbf{1}$ for some $c \in \mathbb{R}$, where $\mathbf{1}=(1,\dots,1) \in \mathbb{R}^{n}$. A \defemph{tropical polynomial} is an element of the semiring $\mathbb{T}[x_{1},\dots,x_n]$ in the variables $x_{1},\dots,x_n$ with coefficients in $\mathbb{T}$. 
	The \defemph{tropical hypersurface} of a tropical polynomial $F = \bigoplus_{u \in \mathbb{N}^{n}} c_u \odot x^u \in \mathbb{T}[x_{1},\dots,x_n]$ is
	\[ V(F) = \Bigg\{ x \in \mathbb{P}(\mathbb{T}^n) : \min_{ u \in \mathbb{N}^{n}}\left\{ c_u + \sum_{i=1}^n u_i \cdot x_i \right\}\text{is achieved at least twice} \Bigg\}, \]
	%\[ V(F) = \Bigg\{ x \in \mathbb{P}(\mathbb{T}^n) : \text{the minimum in} \left\{ c_u + \sum_{i=0}^n u_i \cdot x_i \right\}_{ u \in \mathbb{N}^{n+1}} \text{is achieved at least twice} \Bigg\}, \]
	where whenever $\min_{u\in \mathbb{N}^{n}}\{ c_u + \sum_{i=1}^n u_i \cdot x_i \} = \infty$, by convention the minimum is achieved at least twice, even if the expression is a tropical monomial. The \defemph{tropical variety} of an ideal of tropical polynomials $J \subseteq \mathbb{T}[x_{1},\dots,x_n]$ is defined by
	\[ V(J) = \bigcap_{F \in J} V(F) \subseteq \mathbb{P}(\mathbb{T}^n). \]
	In the following, let $K$ be a field with valuation $\val:K \rightarrow \mathbb{T}$. The \defemph{tropicalization} of a polynomial $f = \sum_{u \in \mathbb{N}^{{n}}} a_u x^u \in K[x_{1},\dots,x_n]$ is the tropical polynomial
	\[ \trop(f) = \bigoplus_{u \in \mathbb{N}^{{n}}} \val(a_u) \odot x^u \in \mathbb{T}[x_{1},\dots,x_n]. \]
	
	The \defemph{tropicalization} $\trop(I)$ of an ideal $I \subseteq K[x_1,\dots,x_n]$ is the ideal of tropical polynomials generated by the tropicalizations of all polynomials in $I$:
	\[ \trop(I) = \langle \trop(f) : f \in I \rangle \subseteq \mathbb{T}[x_1,\dots,x_n]. \]
	Over an algebraically closed base field $K$ with a non-trivial valuation, the \defemph{tropicalization} $\tropbar(X)$ of a subvariety $X \subseteq \mathbb{P}_K^n$, is defined by
	\[ \tropbar(X) = \overline{ \{ (\val(x_0),\dots,\val(x_n)) \in \mathbb{P}(\mathbb{T}^{n+1}) : [x_0:\dots:x_n] \in X \}}, \]
	where the closure is with respect to the Euclidean topology induced on $\mathbb{P}(\mathbb{T}^{n+1})$.
	
	Now we have two possible ways of constructing a tropical variety from a homogeneous ideal $I \subseteq K[x_0,\dots,x_n]$: we can first tropicalize the ideal, and then take its tropical variety $V(\trop(I))$, or we can consider the affine variety $V(I)$ and tropicalize it to obtain $\tropbar(V(I))$. The Fundamental Theorem of Tropical Geometry assures us that, over algebraically closed fields with nontrivial valuation, these two operations yield the same result, i.e. that $ \tropbar(V(I)) = V(\trop(I))$, see \cite[Theorem 6.2.15]{maclagan2015book}.
	
	We use the notation $\tropbar(X)$ for the tropicalization of a subvariety $X \subseteq \mathbb{P}^n$ and denote by $\trop(X)$ the intersection $\tropbar(X) \cap (\mathbb{R}^{n+1} / \mathbb{R} \mathbf{1})$, which is the tropicalization inside $\mathbb{R}^{n+1} / \mathbb{R} \mathbf{1}$ of the intersection $X \cap T^n$, where $T^n \simeq (K \setminus \{ 0 \})^n$ is the algebraic torus inside $\mathbb{P}^n$. That is, $\tropbar(X)$ might contain points in which some of the coordinates are $\infty$. For a thorough description of this extension of tropical varieties, we refer to \cite[Section 2]{brandt2021tropicalflag} and \cite[Section 6.2]{maclagan2015book}.
	
	Further, tropical varieties have a nice polyhedral structure: for an irreducible subvariety $V(I)$ of dimension $d$ in the torus $T^n$, the tropical variety $\trop(V(I))$ is the support of a $d$-dimensional, rational, balanced polyhedral complex that is connected through codimension one, see \cite[Theorem 3.3.5]{maclagan2015book} for more details.

	\subsection{Valuated matroids and tropical linear spaces}\label{sec: prelims valuated matroids}
	
	Throughout, we will assume that the reader is familiar with basic matroid theory, see  \cite{oxley2011} and \cite{welsh1976mtheory}. We write $[n]$ for the set $\{1,2,\dots, n\}$ and $\binom{[n]}{r}$ for the family of subsets of $[n]$ of cardinality $r$.
	
	\begin{definition}\label{def:valuated matroid}
		A \defemph{valuated matroid} of rank $r$ on the ground set $[n]$ is a function $\nu: \binom{[n]}{r} \rightarrow \mathbb{T}$ such that $\nu(B) \neq \infty$ for some $B \in \binom{[n]}{r}$, and for all $I,J \in \binom{[n]}{r}$ and $i \in I\setminus J$ there exists $j \in J \setminus I$ satisfying
		\[ \nu(I)+\nu(J) \geq \nu((I \setminus i) \cup j) + \nu((J \setminus j) \cup i). \]
	\end{definition}
	
	%\alessio{Note that, for a valuated matroid $\nu$, the values $\nu(B)$ are defined only up to addition (or tropical multiplication).}
	
	Two rank $r$ valuated matroids $\mu$ and $\nu$ on a common ground set $[n]$ are \emph{equivalent} if there exists $a \in \mathbb{R}$ such that $\mu(B) = \nu(B) + a$ for every $B \in \binom{[n]}{r}$. In other words, every equivalence class of a valuated matroid $\nu: \binom{[n]}{r} \rightarrow \mathbb{T}$ can be seen as a point in  $\mathbb{P}\left(\mathbb{T}^{\binom{n}{r}}\right)$. Throughout, we will regard two equivalent valuated matroids as being the same, and consider valuated matroids only up to equivalence.
	
	If $\nu: \binom{[n]}{r} \rightarrow \mathbb{T}$ is a valuated matroid, then $\{ B \in \binom{[n]}{r} : \nu(B) \neq \infty \}$ is a collection of bases of a matroid $N$, called the \defemph{underlying matroid} of $\nu$.
	
	\begin{definition}[{\cite[Proposition 7.4.7]{brylawski1986matroidquotients}}]
		Let $M$ and $N$ be two matroids over the same ground set $[n]$. We say that $M$ is a \defemph{matroid quotient} of $N$,  denoted $M \twoheadleftarrow N$, if every flat of $M$ is a flat of $N$.
	\end{definition}
	\begin{comment}
	\begin{proposition-definition}[{\cite[Proposition 7.4.7]{brylawski1986matroidquotients}\cite[Theorem 8.2.8]{kung1986strongmaps}}]
	Let $M$ and $N$ be two matroids over the same ground set $[n]$. We say that $M$ is a \defemph{matroid quotient} of $N$,  denoted $M \twoheadleftarrow N$, if one of the following equivalent conditions is satisfied:
	\begin{enumerate}[noitemsep]
	\item each flat of $M$ is a flat of $N$;
	\item each circuit of $N$ is a union of circuits of $M$;
	\item there exists an extension $Q$ of $N$ by a set of elements $S$ such that $M = Q / S$.
	\end{enumerate}
	\end{proposition-definition}
	\end{comment}
	
	\begin{definition}[{\cite[Definition 4.2.2]{brandt2021tropicalflag}}]
		Let $\mu$ and $\nu$ be two valuated matroids on the ground set $[n]$ of rank $r \leq s$ respectively. We say that $\mu$ is a \defemph{valuated matroid quotient} of $\nu$, denoted $\mu \twoheadleftarrow \nu$, if for every $I \in \binom{[n]}{r}$, $J \in \binom{[n]}{s}$ and $i \in I \setminus J$, there exists $j \in J \setminus I$ such that
		\[ \mu(I) + \nu(J) \geq \mu(I \cup j \setminus i) + \nu(J \cup i \setminus j). \]
	\end{definition}
	
	If $\mu \twoheadleftarrow \nu$ is a valuated matroid quotient, and $M$ and $N$ are the underlying matroids of $\mu$ and $\nu$ respectively, then $M \twoheadleftarrow N$.
	
	\begin{definition}\label{def: valuated flag matroid}
		A sequence of valuated matroids $\boldsymbol{\mu} = (\mu_1,\dots,\mu_k)$ on a common ground set $[n]$ is a \defemph{valuated flag matroid} if $\mu_i \twoheadleftarrow \mu_j$ for every $1 \leq i \leq j \leq k$. Analogously, a sequence of matroids $\mathbf{M} = (M_1,\dots,M_k)$ on $[n]$ is a \defemph{flag matroid} if $M_i \twoheadleftarrow M_j$ for every $1 \leq i \leq j \leq n$.
	\end{definition}
	Let $K$ be a field %of arbitrary characteristic 
	with valuation $\val:K \rightarrow \mathbb{T}$. Let $L$ be an $r$-dimensional vector subspace of $K^n$ given as the row span of a matrix $A$. We denote by $(p_I)$ the \emph{Pl\"ucker coordinates} of $A$, where $p_I$ is defined as the minor of $A$ indexed by $I\in\binom{[n]}{r}$. Then, the function $\mu(A): \binom{[n]}{r} \rightarrow \mathbb{T}$ defined by $I \mapsto \val(p_I)$ is a valuated matroid satisfying Definition \ref{def:valuated matroid}, and we denote by $M(A)$ its underlying matroid. Matroids arising in this way are called \defemph{realizable} (over $K$). As in \cite[Example 4.1.2]{brandt2021tropicalflag}, if $L_1 \subseteq L_2$ are two linear subspaces of $K^n$ generated as the row span of two matrices $A_1$ and $A_2$ respectively, the induced matroids form a quotient, $\mu(A_1) \twoheadleftarrow \mu(A_2)$. Matroid quotients arising in this way are called \defemph{realizable} (over $K$). Note that a matroid quotient $M \twoheadleftarrow N$ of two realizable matroids is not necessarily realizable (see \cite[§1.7.5, Example 7]{borovik2003coxetermatroids}).
	
	\begin{definition}\label{def: valuated circuit}
		Let $\mu$ be a valuated matroid of rank $r$ on $[n]$. For each $I \in \binom{[n]}{r+1}$ define an element $C_\mu(I) \in \mathbb{T}^n$ by
		\[ C_\mu(I)_i = \begin{cases}
			\mu(I \setminus i)  & i \in I, \\
			\infty & i \notin I.
		\end{cases}
		\]
		The set of \defemph{valuated circuits} $\mathcal{C}(\mu)$ of $\mu$ is defined as the image in $\mathbb{P}(\mathbb{T}^n)$ of the following set:
		\[ \left\{ C_\mu(I) : I \in \binom{[n]}{r+1} \right\} \setminus \{ (\infty,\dots,\infty) \}. \] 
		A \defemph{cycle} of a matroid is a union of circuits. A \defemph{vector} (or \defemph{valuated cycle}) of $\mu$ is any element of $\mathbb{P}(\mathbb{T}^n)$ (tropically) generated by the valuated circuits. More explicitly, the family of vectors is
		\[ \mathcal{V}(\mu) = \left\{ \bigoplus_{C \in \mathcal{C}(\mu)} \lambda_C \odot C : \lambda_C \in \mathbb{T}, \lambda_C \neq \infty \right\}. \]
	\end{definition}
	
	For more information about vectors of a valuated matroid see \cite[Section 2.1]{maclagan2018tropicalideals}. The above terminology is partially motivated by the following fact: if $C \in \mathbb{P}(\mathbb{T}^n)$ is a valuated circuit of $\mu$, then the set of its nonzero coordinates is a circuit of the underlying matroid of $\mu$. The analogue is also true for cycles and vectors.
	
	\begin{definition}\label{def: tropicallinearspace}
		Let $\mu$ be a valuated matroid on $[n]$. The \defemph{tropical linear space} of $\mu$ is the tropical variety
		\[ \tropbar(\mu) = \bigcap_{C \in \mathcal{C}(\mu)} V \left( \bigoplus_{i \in [n]} C_i \odot x_i  \right) \subseteq \mathbb{P}(\mathbb{T}^n). \]
	\end{definition}
	
	\subsection{Pl\"{u}cker relations and the Flag Dressian}\label{sec: prelims flag dressian}
	
	In the following, let $K$ be a field with valuation $\val:K \rightarrow \mathbb{T}$.
	
	\begin{definition}
		Let $r \leq s \leq n$ be nonnegative integers. The \defemph{incidence Pl\"{u}cker relations} are the polynomials in the variables $\{p_I : I \in \binom{[n]}{r} \} \cup \{ p_J : J \in \binom{[n]}{s} \}$ with coefficients in $K$:
		\[ \mathscr{P}_{r,s;n} = \left\{ \sum_{j \in J \setminus I} \sign(j;I,J) p_{I \cup j} p_{J \setminus j} : I \in \binom{[n]}{r-1}, J \in \binom{[n]}{s+1} \right\},  \]
		where $\sign(j;I,J) = (-1)^{ \#\{ j' \in J : j < j' \} + \# \{ i \in I : i > j \}  }$. The tropicalizations of the incidence Pl\"{u}cker relations are denoted by $\mathscr{P}^{\trop}_{r,s;n}$. If $r=s$, we recover the \emph{Grassmann-Pl\"{u}cker relations} $\mathscr{P}_{r;n} := \mathscr{P}_{r,r;n}$, defining the Pl\"{u}cker embedding of the Grassmannian $G(r;n)$. We denote their tropicalizations by $\mathscr{P}^{\trop}_{r;n}$.
		
		The incidence-Pl\"ucker relations, combined with the Grassmann-Pl\"{u}cker relations, are the equations of flag varieties, defined as follows. Let $r_1 \leq \dots \leq r_k \leq n$ be nonnegative integers, and set $\mathbf{r} = (r_1,\dots,r_k)$. The \defemph{flag variety} is the subvariety of $\mathbb{P}^{\binom{n}{r_1}-1} \times \dots \times \mathbb{P}^{\binom{n}{r_k}-1}$ given as
		\[ \Fl(\mathbf{r};n) = V \big( \{ \mathscr{P}_{r_i;n} \}_{1 \leq i \leq k} \cup \{ \mathscr{P}_{r_i,r_j;n} \}_{1 \leq i < j \leq k} \big). \]
	\end{definition}
	
	\begin{remark}\label{rmk:contrasting dressian and grassmannian}
		Grassmannians and flag varieties have two respective tropical analogues. \emph{Tropical Grassmannians} $\trop(G(r;n))$ and \emph{tropical flag varieties $\trop(\Fl(\mathbf{r};n))$} are tropicalizations of their classical analogues, as in Section \ref{sec: prelims tropgem}. These \emph{tropical varieties} parametrize \emph{tropicalized} objects: $\trop(G(r;n))$ parametrizes tropicalizations of linear subspaces of $K^n$ of dimension $r$ (see \cite[Theorem 3.8]{speyersturmfels2004grassmannian}) and $\trop(\Fl(\mathbf{r};n))$ parametrizes realizable tropical flags $\tropbar(L_1) \subseteq \dots \subseteq \tropbar(L_k)$ where $L_1 \subseteq \dots \subseteq L_k$ is a flag of subspaces of $K^n$ satisfying $\dim L_i = r_i$, \cite[Remark 4.2.6]{brandt2021tropicalflag}.
		
		\emph{Dressians} $\Dr(r;n)$ and \emph{flag Dressians} $\FlDr(\mathbf{r};n)$ are the intersections of the tropical hypersurfaces given by their respective Pl\"ucker relations. They are \emph{tropical prevarieties} and parametrize \emph{tropical} objects. In general,  (flag) Dressians and tropical Grassmannians are different polyhedral complexes, see, for instance \cite{herrmann2014dressians} for the Grassmannian and \cite[Section 5]{brandt2021tropicalflag} for the flag variety. The Dressian $\Dr(r;n)$ parametrizes (not necessarily realizable) tropical linear spaces of rank $r$ in $\mathbb{P}(\mathbb{T}^n)$ as in Definition \ref{def: tropicallinearspace}, see also \cite{herrmann2014dressians}. Flag Dressians parametrize (not necessarily realizable) flags of tropical linear spaces:
	\end{remark}

	\begin{theorem}[{\cite[Theorem 1]{haque2012tropical},\cite[Lemma 2.6]{mundinger2018image},\cite[Theorem A]{brandt2021tropicalflag}}]\label{thm: brandt equivalence for tropical flags}
		Let $\boldsymbol{\mu} = (\mu_1,\dots,\mu_k)$ be a sequence of valuated matroids on a common ground set $[n]$ of ranks $\boldsymbol{r}=(r_1,\dots,r_k)$ respectively. The following are equivalent:
		\begin{enumerate}[noitemsep, label = (\alph*)]
			\item $\boldsymbol{\mu}$ is a point in $\FlDr(\boldsymbol{r};n) = \bigcap_{1 \leq i \leq k} V(\mathscr{P}^{\trop}_{r_i;n}) \cap \bigcap_{1 \leq i < j \leq k}V(\mathscr{P}^{\trop}_{r_i,r_j;n})$,
			\item $\boldsymbol{\mu}$ is a valuated flag matroid,
			\item $\tropbar(\mu_1) \subseteq \dots \subseteq \tropbar(\mu_k)$.
		\end{enumerate}
	\end{theorem}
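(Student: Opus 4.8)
The plan is to reduce the statement to the case $k=2$. Concretely, it suffices to prove that for two valuated matroids $\mu,\nu$ on $[n]$ of ranks $r\le s$ the following are equivalent: (a') the pair $(\mu,\nu)$ lies in $V(\mathscr{P}^{\trop}_{r,s;n})$; (b') $\mu\twoheadleftarrow\nu$; (c') $\tropbar(\mu)\subseteq\tropbar(\nu)$. Granting this, the theorem follows by applying the pairwise equivalence to every pair $(\mu_i,\mu_j)$ with $i<j$ (using $r_i\le r_j$): the condition $\boldsymbol{\mu}\in\FlDr(\boldsymbol{r};n)$ unwinds to ``$\mu_i\in V(\mathscr{P}^{\trop}_{r_i;n})$ for all $i$, and $(\mu_i,\mu_j)\in V(\mathscr{P}^{\trop}_{r_i,r_j;n})$ for all $i<j$'', where the first family of conditions is automatic since each $\mu_i$ is a valuated matroid (see the last sentence of the next paragraph); by Definition \ref{def: valuated flag matroid}, $\boldsymbol{\mu}$ being a valuated flag matroid means exactly ``$\mu_i\twoheadleftarrow\mu_j$ for all $i\le j$''; and the chain $\tropbar(\mu_1)\subseteq\dots\subseteq\tropbar(\mu_k)$ is equivalent, by transitivity of inclusion, to ``$\tropbar(\mu_i)\subseteq\tropbar(\mu_j)$ for all $i<j$''.

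The equivalence (a')$\Leftrightarrow$(b') is an elementary transposition argument that uses nothing beyond the definitions. For (a')$\Rightarrow$(b'): given $A\in\binom{[n]}{r}$, $B\in\binom{[n]}{s}$ and $i\in A\setminus B$ (then $B\setminus A\ne\emptyset$, as otherwise $s<r$), look at the incidence relation indexed by $I=A\setminus i\in\binom{[n]}{r-1}$ and $J=B\cup i\in\binom{[n]}{s+1}$; its summands are indexed by $J\setminus I=\{i\}\cup(B\setminus A)$, with value $\mu(A)+\nu(B)$ at $j=i$ and value $\mu(A\cup j\setminus i)+\nu(B\cup i\setminus j)$ at $j\in B\setminus A$, so the fact that the tropical minimum is attained at least twice forces some $j\in B\setminus A$ to satisfy $\mu(A\cup j\setminus i)+\nu(B\cup i\setminus j)\le\mu(A)+\nu(B)$, which is the quotient inequality for $(A,B,i)$. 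For (b')$\Rightarrow$(a'): if the relation for some $(I,J)$ had its minimum attained uniquely, at $j_0$ and with finite value, one sets $A=I\cup j_0$, $B=J\setminus j_0$ and applies the quotient inequality at $i=j_0\in A\setminus B$ to produce an index $j^*\in(B\setminus A)\subseteq(J\setminus I)$ with $j^*\ne j_0$ whose summand in the relation is $\le$ the minimum, a contradiction. Specialising this argument to $\mu=\nu$ and $r=s$ recovers the classical fact that a function $\binom{[n]}{r}\to\mathbb{T}$ satisfies the exchange axiom of Definition \ref{def:valuated matroid} if and only if it satisfies all tropical Grassmann-Pl\"ucker relations, which is exactly what licenses the reduction in the first paragraph.

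For (b')$\Leftrightarrow$(c') I would pass to duality of valuated matroids, invoking three standard inputs: $\tropbar(\mu)$ equals the set $\mathcal{V}(\mu^*)$ of valuated vectors of the dual valuated matroid $\mu^*$ (equivalently, $\tropbar(\mu)$ is the tropical span of the valuated cocircuits of $\mu$); duality reverses matroid quotients, $\mu\twoheadleftarrow\nu\iff\nu^*\twoheadleftarrow\mu^*$; and, for valuated matroids, $P\twoheadleftarrow Q$ holds if and only if every valuated circuit of $Q$ is a valuated vector of $P$. Chaining these gives $\mu\twoheadleftarrow\nu\iff\nu^*\twoheadleftarrow\mu^*\iff$ every valuated circuit of $\mu^*$ lies in $\mathcal{V}(\nu^*)\iff\mathcal{V}(\mu^*)\subseteq\mathcal{V}(\nu^*)$, where the last equivalence holds because the valuated circuits of $\mu^*$ tropically generate $\mathcal{V}(\mu^*)$ while $\mathcal{V}(\nu^*)$ is closed under tropical linear combinations; and $\mathcal{V}(\mu^*)\subseteq\mathcal{V}(\nu^*)$ is precisely $\tropbar(\mu)\subseteq\tropbar(\nu)$. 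I expect this last step to be the main obstacle: the two structural facts it rests on — that $\tropbar(\mu)$ is generated by valuated cocircuits (the tropical analogue of ``the double orthogonal complement is the original space'') and that valuated matroid quotients are detected by valuated (co)circuits — are the genuinely substantial ingredients, and they must be handled with care about coordinates equal to $\infty$, i.e. working in $\mathbb{P}(\mathbb{T}^n)$ rather than in $\mathbb{R}^n/\mathbb{R}\mathbf{1}$. These are exactly the points worked out in \cite{brandt2021tropicalflag} (with the underlying-matroid shadow already in \cite{haque2012tropical,mundinger2018image}), which I would cite rather than reprove; the reduction to $k=2$ and the equivalence (a')$\Leftrightarrow$(b') are, by contrast, routine.
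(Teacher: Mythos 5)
The paper does not actually prove this statement---it is quoted from Haque, Mundinger and Brandt--Eur--Zhang---so the comparison is against those cited proofs, and your sketch is correct and follows essentially the same lines: the reduction to $k=2$ and the index-translation argument for (a)$\Leftrightarrow$(b) are complete and handle the $\infty$-conventions properly. For (b)$\Leftrightarrow$(c), your chain via duality, the cocircuit-span description of $\tropbar(\mu)$, and the detection of valuated quotients by circuits/vectors isolates exactly the substantive ingredients, which you defer to \cite{brandt2021tropicalflag} (and \cite{mundinger2018image}) just as the paper itself does, so no gap remains beyond what the paper also leaves to the literature.
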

	\begin{figure}[ht]
		\centering\begin{tabular}{c c}
			
			\begin{tikzpicture}
				
				\draw[densely dotted] (0,0)--(-2.6,-2)
				(0,0) -- (-0.2, 2.5)
				(0,0) -- (0.2, 0.5)
				(0,0) -- (2.6,-1);
				
				\node[below left] at (-2.6,-2) {\small$1$};
				\node[above left] at (-0.2, 2.5) {\small$3$};
				\node[ above right] at (0.2, 0.5){\small$4$};
				\node[ below right] at (2.6,-1){\small$2$};
				
				\draw[cyan, thick] (-1.3,-2.5) -- (0,-1.5) -- (1.3,-2)  (0,-1.5) -- (0,0) -- (0, 1.5) -- (-0.1, 2.75) (0,1.5) -- (0.1,1.75);
				\fill[cyan] (0, -1.5) circle (1.5pt);
				\fill[cyan]  (0, 1.5)  circle (1.5pt);
				\draw[red, thick] (-2.7,-0.75) -- (-1.4, 0.25) -- (-1.5,1.5)  (-1.4, 0.25)-- (0,0) --(1.4, -0.25) -- (2.7,-0.75) (1.4, -0.25) -- (1.5,0);
				\fill[red] (-1.4, 0.25) circle (1.5pt);
				\fill[red] (1.4, -0.25) circle (1.5pt);
				\fill (-0.65,-2) circle (2.5pt);
				\fill (-0.7,0.125) circle (2.5pt);
			\end{tikzpicture}&
			\begin{tikzpicture}
				\foreach \i in {1,...,5}
				\fill (\i*360/5:1.5) coordinate (5\i) circle(2 pt);
				
				\foreach \i in {1,...,5}
				\fill (\i*360/5:3) coordinate (5\i) circle(2 pt);
				\draw  (360/5:3) --  (2*360/5:3) --  (3*360/5:3) --  (4*360/5:3) --  (5*360/5:3)-- (360/5:3);
				\node[above] at (360/5:1.5) {$1$};
				\node[above] at (2*360/5:1.5) {$4$};
				\node[above, left] at (3*360/5:1.5) {$13$};
				\node[above, right] at (4*360/5:1.5) {$14$};
				\node[above] at (5*360/5:1.5) {$24$};
				\node[above] at (360/5:3) {$12$};
				\node[above, left] at (2*360/5:3) {$34$};
				\node[above, left] at (3*360/5:3) {$3$};
				\node[below] at (4*360/5:3) {$23$};
				\node[above,right] at (5*360/5:3) {$2$};
				\draw  (360/5:3) -- (360/5:1.5);
				\draw  (2*360/5:3) -- (2*360/5:1.5);
				\draw  (3*360/5:3) -- (3*360/5:1.5);
				\draw[blue]  (4*360/5:3) -- (4*360/5:1.5);
				\draw  (5*360/5:3) -- (5*360/5:1.5);
				\draw  (3*360/5:1.5) -- (5*360/5:1.5) -- (2*360/5:1.5)-- (4*360/5:1.5)-- (1*360/5:1.5)-- (3*360/5:1.5);
				\draw[blue] (3*360/5:1.5) -- (5*360/5:1.5);
				\draw[blue]  (1*360/5:3) -- (2*360/5:3);
				\foreach \i in {1,...,5}
				\fill (\i*360/5:1.5) coordinate (5\i) circle(2 pt);
				
				\foreach \i in {1,...,5}
				\fill (\i*360/5:3) coordinate (5\i) circle(2 pt);
				\fill[red] (-0.55,-0.65) circle (2.5pt);
				\fill[cyan] (0.7,2.125) circle (2.5pt);
			\end{tikzpicture}	
		\end{tabular}
		\caption{On the left, two tropical flags in $\trop ( \Fl(1,2;4) )$ in $\mathbb{P}(\mathbb{T}^4)$. On the right, a Petersen graph representing the tropical flag variety $\trop ( \Fl(1,2;4) )$ after quotienting by its lineality space. The red point corresponds to the red flag on the left, the cyan point is the cyan flag.}
		\label{fig: tropfl24}
	\end{figure}
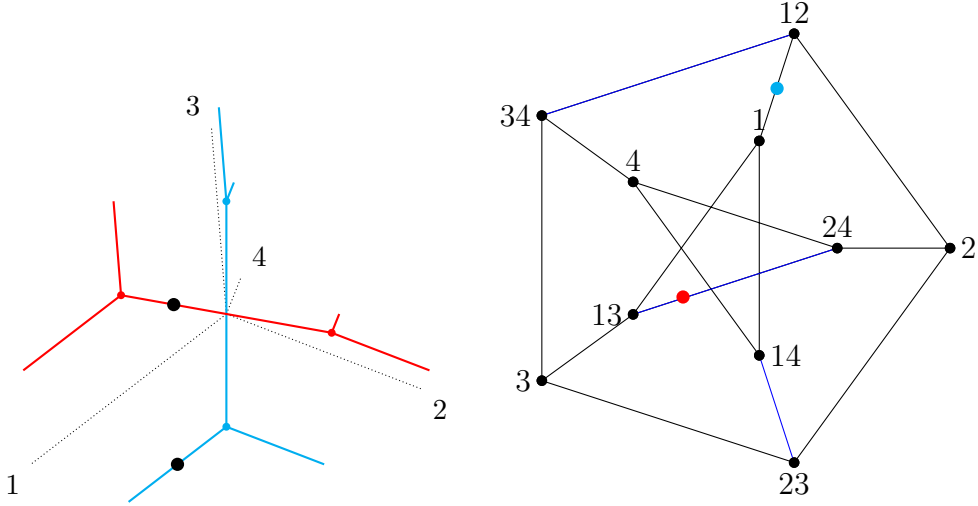
	
	\begin{example}\label{ex: Fl 1,2;4}
		In this example we describe the tropicalization of the flag variety 
		$\Fl(1,2;4)$, parametrizing flags of points in lines in $\mathbb{P}^3$, with respect to the trivial valuation. By definition, $\Fl(1,2;4) = V ( \mathscr{P}_{2;4} \cup \mathscr{P}_{1,2;4} ),$
		since $\mathscr{P}_{1;4}$ contains just the zero polynomial. 
		The tropicalizations of the equations defining the ideal, given below, form a tropical basis, thus $\FlDr(1,2;4)=\trop ( \Fl(1,2;4) )$ (see \cite[Theorem 5.2.1]{brandt2021tropicalflag}).
		\begin{align*}
			\mathscr{P}_{2;4}^{\trop} & = \left\{ p_{1,4}p_{2,3} \oplus p_{1,3}p_{2,4} \oplus p_{1,2}p_{3,4} \right\}, \\
			\mathscr{P}_{1,2;4}^{\trop} & = \left\{ \begin{array}{l}
				p_{1}p_{2,3} \oplus p_{2}p_{1,3} \oplus p_3p_{1,2}, \\
				p_{4}p_{1,2} \oplus p_2p_{1,4} \oplus p_1p_{2,4}, \\
				p_{4}p_{1,3} \oplus p_1p_{3,4} \oplus p_3p_{1,4}, \\
				p_{4}p_{2,3} \oplus p_2p_{3,4} \oplus p_3p_{2,4}.
			\end{array} \right\}
		\end{align*}
		The tropicalization $\trop \big( \Fl(1,2;4) \big)$ can be computed in Macaulay2 \cite{M2} and is a 5-dimensional simplicial fan in ${\mathbb{P}(\mathbb{T}^4)}\times \mathbb{P}(\mathbb{T}^6)$ with a lineality space of dimension 3 and an f-vector $(1,10,15)$ after taking the quotient by the lineality space. The tropical variety modulo lineality space is a Petersen graph (see Figure \ref{fig: tropfl24}). 
		
		A point $p$ in a top-dimensional cone (corresponding to an edge of the Petersen graph) parametrizes a point $v_p$ contained in a generic tropical line $L_p$ in the three-dimensional tropical projective space ${\mathbb{P}(\mathbb{T}^4)}$. A generic tropical line in ${\mathbb{P}(\mathbb{T}^4)}$ is a one dimensional, balanced polyhedral complex with one bounded edge and four unbounded edges, two adjacent to each vertex of the bounded edge. The four distinct directions of the unbounded edges of $L_p$ are given by the images of $e_i$ in ${\mathbb{P}(\mathbb{T}^4)}$ for $i \in [4]$. Since $L_p$ is balanced, the directions of the unbounded edges are completely determined by the direction of the bounded edge, which is of the form $e_i+e_j$ for $i,j \in [4]$ distinct. As $e_a+e_b = -(e_c+e_d)$ in ${\mathbb{P}(\mathbb{T}^4)}$ for all $a,b,c,d \in [4]$ distinct, there are three such choices. The position of the point $v_p$ on the line $L_p$ can be freely chosen, and there are 5 choices, one for each edge of $L_p$. This amounts to $3 \times 5 = 15$ possible choices, corresponding to the maximal cones.
		
		In the Petersen graph found in Figure \ref{fig: tropfl24}, these cones are indexed as follows. The three (blue) edges connecting the vertices $(ab)$ to $(cd)$, for $a,b,c,d \in [4]$ distinct, correspond to the three cases where the point lies on the bounded edge in direction $e_a+e_b$, for an example see the red flag in Figure \ref{fig: tropfl24}. The remaining twelve (black) edges, connecting $(a)$ to $(ab)$ for $a,b\in[4]$ distinct, correspond to the cases where the point lies on an unbounded edge in direction $e_a$, and the bounded edge is in direction $e_a+e_b$. For an example, see the cyan flag in Figure \ref{fig: tropfl24}.
		%this mods out symmetries
		
		The length of the bounded edge of $L_p$ and the position of the point $v_p$ in $L_p$ can be freely chosen, amounting to two degrees of freedom. These are the two degrees of freedom remaining after taking the quotient with the lineality space. The choice of the point $v_p$ in  ${\mathbb{P}(\mathbb{T}^4)}$ corresponds to three further degrees of freedom. In total, this generates a cone of (projective) dimension five. %Since $\trop(\Fl(1,2;4))\subseteq \mathbb{P}(\mathbb{T}^{3}) \times \mathbb{P}(\mathbb{T}^{5})$ is contained in the product of two projective spaces, it is a cone of affine dimension seven.
	\end{example}
	
	\section{Linear Degenerate Flag Dressian}\label{sec: linear degenerate section}
	
	In this section, we define the linear degenerate flag Dressian and prove the equivalences of our main results, Theorem \ref{thm:main_valuated_ld_flag_correspondence} and Theorem \ref{thm:main_valuated_ld_flag_correspondence_realizable}.

	\subsection{Linear degenerate Pl\"{u}cker relations}\label{sec: ld_pluecker_relations}
	
	We start by defining the linear degenerate Pl\"{u}cker relations. These parametrize linear degenerate flags, which are collections of subspaces $V_1, \dots, V_m\subseteq K^n$ satisfying $\pr_{S_i}(V_i)\subseteq V_{i+1}$ for sets $S_i\subseteq [n]$ and all $i\in \{1,\dots, m\}$, where for a subset of indices $S \subseteq [n]$, we define the linear map $\pr_S:K^n \rightarrow K^n$ by setting $\pr_S(e_i) = 0$ if $i \in S$ and $\pr_S(e_i)=e_i$ otherwise.
	
	\begin{definition}[Linear degenerate Pl\"{u}cker relations]\label{def: ld pluecker relations}
		Let $r \leq s  \leq n$ be nonnegative integers and let $S \subseteq [n]$. The \defemph{linear degenerate Pl\"{u}cker relations} are the following polynomials in the variables $\{ p_I : I \in \binom{[n]}{r} \} \cup \{ p_J : J \in \binom{[n]}{s} \}$ with coefficients in $K$: 
		\[ \mathscr{P}_{r,s;S;n} = \left\{ \sum_{j \in J \setminus (I \cup S)} \sign(j;I,J) p_{I \cup j} p_{J \setminus j} : I \in \binom{[n]}{r-1}, J \in \binom{[n]}{s+1}  \right\} \]
		where $\sign(j;I,J) = (-1)^{ \#\{ j' \in J : j < j' \} + \# \{ i \in I : i > j \}  }$. We denote their tropicalizations by $\mathscr{P}_{r,s;S;n}^{\trop}$.
	\end{definition}
	
	Linear degenerate Pl\"ucker relations appear in \cite[Section 5.1]{fourier2020lineardegenerations}, arising as initial degenerations of the Pl\"ucker relations. The form in which we are expressing them here can be deduced from the relations given in \cite{lorscheid2019pluckerelations}. The linear degenerate Pl\"{u}cker relations parametrize linear degenerate flags of linear spaces. For the sake of completeness, we give a proof similar to the original proof of the classical Pl\"ucker relations following \cite[Theorem 1.8]{bokowski1989pluecker}.
	
	\begin{proposition}\label{prop: linear degenerate pluecker relations}
		Let $U$ and $V$ be vector subspaces of $K^n$ of dimension $r \leq s$ respectively, and let $S \subseteq [n]$. We have $\pr_S(U) \subseteq V$ if and only if the Pl\"{u}cker coordinates of $U$ and $V$ satisfy the linear degenerate Pl\"{u}cker relations $\mathscr{P}_{r,s;S;n}$.
	\end{proposition}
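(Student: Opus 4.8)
The plan is to prove both implications simultaneously, by exhibiting a chain of equivalences that transforms the geometric condition $\pr_S(U)\subseteq V$ into the linear degenerate Pl\"ucker relations $\mathscr{P}_{r,s;S;n}$, following the exterior‑algebra proof of the classical incidence Pl\"ucker relations in \cite[Theorem 1.8]{bokowski1989pluecker}. Fix an $r\times n$ matrix $A$ and an $s\times n$ matrix $B$ whose rows span $U$ and $V$ respectively, and for $I'\in\binom{[n]}{r}$, $J'\in\binom{[n]}{s}$ let $p_{I'}$, $p_{J'}$ denote the corresponding maximal minors. The first reduction is elementary: a vector $w\in K^n$ lies in $V$ if and only if the $(s+1)\times n$ matrix obtained by stacking $w$ on top of $B$ has rank $s$, i.e.\ all its $(s+1)\times(s+1)$ minors vanish; expanding the minor on a column set $J\in\binom{[n]}{s+1}$ along its first row shows this is equivalent to $\sum_{j\in J}(\pm)\, w_j\, p_{J\setminus j}=0$ for every such $J$. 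Applying this with $w=\pr_S(u)$ for $u\in U$, and using $\pr_S(u)_j=u_j$ for $j\notin S$ and $\pr_S(u)_j=0$ for $j\in S$, the condition $\pr_S(U)\subseteq V$ becomes: for every $J\in\binom{[n]}{s+1}$ the vector $c^J\in K^n$ with $c^J_j=(\pm)p_{J\setminus j}$ for $j\in J\setminus S$ and $c^J_j=0$ otherwise is orthogonal to every row of $A$, i.e.\ $c^J\in U^\perp=\ker A$.

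The second reduction expresses membership in $U^\perp$ through Pl\"ucker coordinates of $U$. Since $U^\perp=\ker A$ has dimension $n-r$, the standard duality of complementary minors gives that its maximal minor on a set $L'\in\binom{[n]}{n-r}$ equals, up to a sign and a global scalar, $p_{[n]\setminus L'}$. Running the same rank criterion for $c^J$ against a spanning matrix of $U^\perp$, we get $c^J\in U^\perp$ if and only if $\sum_{l\in L}(\pm)\, c^J_l\, p_{([n]\setminus L)\cup l}=0$ for every $L\in\binom{[n]}{n-r+1}$. Writing $I=[n]\setminus L\in\binom{[n]}{r-1}$, the index $l$ ranges over $[n]\setminus I$, and since $c^J_l$ vanishes outside $J\setminus S$ this sum collapses to $\sum_{l\in J\setminus(I\cup S)}(\pm)\, p_{I\cup l}\, p_{J\setminus l}=0$. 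Letting $I$ and $J$ range over $\binom{[n]}{r-1}$ and $\binom{[n]}{s+1}$, and noting that every step above is an equivalence, $\pr_S(U)\subseteq V$ holds precisely when this system of relations holds — which is $\mathscr{P}_{r,s;S;n}$, provided the accumulated signs match $\sign(j;I,J)$.

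The one genuine obstacle is therefore the sign bookkeeping: one must verify that the first‑row cofactor signs from the two rank expansions, composed with the sign in the complementary‑minor duality for $U^\perp$, simplify to exactly $\sign(j;I,J)=(-1)^{\#\{j'\in J\,:\,j<j'\}+\#\{i\in I\,:\,i>j\}}$. The key point making this manageable is that this sign identity does not involve $S$ at all: the coefficient attached to a fixed index $l$ is literally unaffected by whether the summation set is $J\setminus I$ or the smaller $J\setminus(I\cup S)$. Hence the computation is identical to the classical case $S=\emptyset$ — where it is carried out in \cite[Theorem 1.8]{bokowski1989pluecker} and produces the incidence Pl\"ucker relations $\mathscr{P}_{r,s;n}$ — and for general $S$ one simply discards the terms with $j\in S$. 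It remains to record the trivial boundary cases ($r=0$ or $s=n$, where both sides of the equivalence hold vacuously) and to observe that the rank/minor criteria used above are legitimate because $U$, $V$ and $U^\perp$ are honest subspaces of dimensions $r$, $s$ and $n-r$, so $A$, $B$ and a spanning matrix of $U^\perp$ all have full row rank.
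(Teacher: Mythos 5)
Your argument is correct, but it follows a genuinely different route from the paper. The paper argues the two implications separately and stays in the primal picture: for the forward direction it chooses a basis of $V$ extending a basis of $\pr_S(U)$, transfers the column dependency among $s+1$ columns of $B$ to the projected matrix $A'$, and substitutes the resulting zero vector into a determinant with $r-1$ columns of $A$; for the converse it recovers that dependency from the relations by letting $I$ range over all $(r-1)$-subsets of a set $I'$ indexing independent columns of $A$, and concludes via a Laplace expansion that each row of $A'$ lies in the row span of $B$. You instead encode both containments as bordered-rank conditions ($(s{+}1)$-minors of $w$ stacked on $B$, and $(n{-}r{+}1)$-minors against a basis of $U^\perp$) and use the complementary-minor duality identifying the Pl\"ucker coordinates of $U^\perp$ with those of $U$, which yields a single chain of equivalences handling both directions at once and avoids the adapted choice of $B$ and the span-over-subsets trick. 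The price is the extra classical ingredient (dual Pl\"ucker coordinates of the orthogonal complement) and a three-layer sign computation, which you do not carry out; your reduction of the signs to the $S=\emptyset$ case is sound in spirit --- the per-term sign attached to an index $j$ indeed does not depend on $S$, and one checks that the two cofactor signs combined with the complementation sign give $\sign(j;I,J)$ up to a global sign per relation --- but be aware that the cited reference performs the classical sign bookkeeping for the paper's derivation, not for your dual one, so strictly speaking you should either do that short computation yourself or note that it differs from the reference; this is the same level of looseness the paper itself allows (``up to a possible global change of sign''). Also make sure to state that in your first reduction it suffices to test $\pr_S(u)$ on the rows of $A$, by linearity of $\pr_S$, which you implicitly use when passing to $c^J\in U^\perp$.
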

	\begin{proof}
		Suppose that $\pr_S(U) \subseteq V$. Let $A \in K^{r,n}$ be a matrix whose rows are a basis of $U$, and let $A' \in K^{r,n}$ be the matrix obtained from $A$ by substituting the columns indexed by $S$ with columns of zeros. Note that the rows of $A'$ are a set of generators for $\pr_S(U)$. Let $B \in K^{s,n}$ be a matrix whose rows are a basis of $V$, obtained by extending a basis of $\pr_S(U)$ consisting of rows of $A'$. Fix $I = \{ i_1 < \dots < i_{r-1} \} \in \binom{[n]}{r-1}$ and $J = \{ j_1 < \dots < j_{s+1} \}  \in \binom{[n]}{s+1}$. The column vectors $B_{j_1},\dots,B_{j_{s+1}}$ are linearly dependent, satisfying the dependency relation
		\[ \sum_{k=1}^{s+1} (-1)^k \det(B_{j_1},\dots,B_{j_{k-1}},B_{j_{k+1}},\dots,B_{j_{s+1}}) \cdot B_{j_k} = 0. \]
		In particular, from the construction of $A'$ and $B$ we also obtain
		\[ \sum_{k=1}^{s+1} (-1)^k \det(B_{j_1},\dots,B_{j_{k-1}},B_{j_{k+1}},\dots,B_{j_{s+1}}) \cdot A'_{j_k} = 0. \]
		Substituting the above expression of the ($r$-dimensional) zero vector into the equation $\det(0,A_{i_1},\dots,A_{i_{r-1}}) = 0$ we obtain
		\begin{align*}
			& \det \bigg( \sum_{k = 1}^{s+1} (-1)^k \det(B_{j_1}, \dots, B_{j_{k-1}}, B_{j_{k+1}}, \dots, B_{j_{s+1}}) \cdot A'_{j_k}, A_{i_1},\dots,A_{i_{r-1}} \bigg) = \\
			& \sum_{k = 1}^{s+1} (-1)^k \det(B_{j_1}, \dots, B_{j_{k-1}}, B_{j_{k+1}}, \dots, B_{j_{s+1}}  ) \cdot \det(A'_{j_k},  A_{i_1},\dots,A_{i_{r-1}}) = 0.
		\end{align*}
		Now by construction we have
		\[ A'_{j_k} = \begin{cases}
			A_{j_k} & \text{if } j_k \notin S, \\
			0 & \text{if } j_k \in S.
		\end{cases} \]
		Thus, by substituting the above in the previously displayed equation, and by reordering the column vectors keeping track of sign changes, the above relations become the desired linear degenerate Pl\"{u}cker relations, up to a possible (global) change of sign that depends on $r$ and $s$.%Therefore, the above relations, up to sign, are the desired linear degenerate Pl\"{u}cker relations.
		
		Conversely, suppose that the Pl\"{u}cker coordinates of $U$ and $V$ satisfy the linear degenerate Pl\"{u}cker relations. Let $A$ and $A'$ be as above, and let $B \in K^{s,n}$ be a matrix whose rows are a basis of $V$. We need to show that the rows of $A'$ are spanned by the rows of $B$. Let $I = \{ i_1 < \dots < i_{r-1} \} \in \binom{[n]}{r-1}$ and $J = \{ j_1 < \dots < j_{s+1} \}  \in \binom{[n]}{s+1}$. Proceeding similarly as above, from the incidence Pl\"{u}cker relations we can write
		\begin{equation}\label{eq: determinant plucker relations}
			\det \bigg( \sum_{k = 1}^{s+1} (-1)^k \det(B_{j_1}, \dots, B_{j_{k-1}}, B_{j_{k+1}}, \dots, B_{j_{s+1}}) \cdot A'_{j_k}, A_{i_1},\dots,A_{i_{r-1}} \bigg) = 0.
		\end{equation}
		Since $A$ has maximal rank, we can choose a subset $I' \in \binom{[n]}{r}$ such that the columns of $A$ indexed by $I'$ form a basis. By choosing all possible cardinality $r-1$ subsets $I \subseteq I'$ in \eqref{eq: determinant plucker relations}, we have that the first vector in the argument of the determinant in \eqref{eq: determinant plucker relations} is in the span of the spaces generated by the vectors indexed by all such sets $I$. This is possible only for the zero vector. Therefore, we obtain
		\[ \sum_{k = 1}^{s+1} (-1)^k \det(B_{j_1}, \dots, B_{j_{k-1}}, B_{j_{k+1}}, \dots, B_{j_{s+1}}) \cdot A'_{j_k} = 0. \]
		Let $C$ be the matrix consisting of the rows of $B$ plus an additional row of $A'$. By using Laplace expansion for computing the determinant of the square submatrix of $C$ with columns indexed by $J$ with respect to the row of $A'$, the above dependencies imply that the rank of $C$ is equal to the rank of $B$, i.e., that the row of $A'$ in $C$ is a linear combination of the rows of $B$.
	\end{proof}
	
	Let $r_1 \leq \dots \leq r_k \leq n$ be nonnegative integers, let $S_1,\dots,S_{k-1} \subseteq [n]$, and write $\mathbf{r} = (r_1,\dots,r_k)$, $\mathbf{S} = (S_1,\dots,S_{k-1})$, and $S_{ij} = S_i \cup S_{i+1} \cup \dots \cup S_{j-1}$ for $1 \leq i < j \leq k$.
	
	\begin{definition}[Linear degenerate flag variety]
		The \defemph{linear degenerate flag variety} of rank $\mathbf{r}$ and degeneration type $\mathbf{S}$ is the following subvariety of $\mathbb{P}^{\binom{n}{r_1}-1} \times \dots \times \mathbb{P}^{\binom{n}{r_k}-1}$
		\[  \LFl(\mathbf{r},\mathbf{S};n) = V\big(\{\mathscr{P}_{r_i;n} \}_{1 \leq i \leq k} \cup \{ \mathscr{P}_{r_i,r_j;S_{ij};n}\}_{1 \leq i < j \leq k} \big). \]
		We call its tropicalization the \defemph{linear degenerate tropical flag variety}. 
	\end{definition}
	
	A consequence of Proposition \ref{prop: linear degenerate pluecker relations} is $(a) \Leftrightarrow (b)$ of Theorem \ref{thm:main_valuated_ld_flag_correspondence_realizable}. Before proving it, we introduce some more notation to give a matroidal interpretation of Proposition \ref{prop: linear degenerate pluecker relations}. First, we recall the definition of \emph{deletion} for valuated matroids.
	
	\begin{proposition-definition}[{\cite[Proposition 1.2]{dress1992valuated}}]\label{propdef: deletion of valuated matroid}
		Let $\mu$ be a matroid of rank $r$ on $[n]$ with underlying matroid $M$, and let $S \subseteq [n]$. Let $k$ be the rank of the deletion $M \setminus S$. Choose $I \in \binom{S}{r-k}$ such that $([n] \setminus S) \cup I$ has rank $r$. Then, the map $\mu \setminus S : \binom{[n] \setminus S}{k} \rightarrow \mathbb{T}$ defined by $(\mu \setminus S)(B) = \mu(B \cup I)$ is a valuated matroid, with underlying matroid $M \setminus S$. Further, $\mu \setminus S$ is compatible with equivalence, and different choices of $I$ give rise to equivalent valuated matroids. The matroid $\mu \setminus S$ is called the \emph{deletion} of $\mu$ by $S \subseteq [n]$.
	\end{proposition-definition}

	%For a valuated matroid $\mu$ on $[n]$, we denote by $\mu \setminus S$ the deletion of $\mu$ by the subset $S \subseteq [n]$.
	We write $\mu_S$ for the valuated matroid on the ground set $[n]$ defined by  
	\begin{equation}\label{eq: def of mu_S}
		\mu_S(B) = \begin{cases}
			(\mu \setminus S)(B) & \text{if } B \cap S = \emptyset, \\
			\infty & \text{otherwise,}
		\end{cases}
	\end{equation}
	for every $B \in \binom{[n]}{\rk(\mu\setminus S)}$. The matroid $\mu_S$ can be alternatively regarded as a direct sum of valuated matroids (defined as in \cite[Definition 2.6]{husic2022complete}):
	\[ \mu_S = (\mu \setminus S) \oplus U_{0,|S|}, \]
	where we add the deleted elements of the ground set as loops.
	
	Recall that we can view (an equivalence class of) a valuated matroid $\mu: \binom{[n]}{r} \rightarrow \mathbb{T}$ as a point $\mu \in \mathbb{P}\left(\mathbb{T}^{\binom{n}{r}}\right)$, and similarly a pair of valuated matroids $(\mu,\nu)$ of rank $r$ and $s$ respectively, as a point $\mu \times \nu \in \mathbb{P}\left(\mathbb{T}^{\binom{n}{r}}\right) \times \mathbb{P}\left(\mathbb{T}^{\binom{n}{s}}\right)$. We are now ready to prove $(a) \Leftrightarrow (b)$ of Theorem \ref{thm:main_valuated_ld_flag_correspondence_realizable}.

	\begin{corollary}\label{cor: thm b proof}
		Let $\mu$ and $\nu$ be two realizable valuated matroids of rank $r$ and $s$ on the common ground set $[n]$, and let $S \subseteq [n]$. Then, $\mu \times \nu \in \tropbar(\LFl(r,s,S;n))$ if and only if $\mu_S \twoheadleftarrow \nu$ is a realizable quotient of valuated matroids.
	\end{corollary}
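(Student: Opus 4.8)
The plan is to reduce Corollary~\ref{cor: thm b proof} to Proposition~\ref{prop: linear degenerate pluecker relations} via the Fundamental Theorem of Tropical Geometry, matching up the linear degenerate flag variety with the valuated matroid quotient condition through the auxiliary matroid $\mu_S$. First I would unwind the definitions: by the Fundamental Theorem (cited as \cite[Theorem 6.2.15]{maclagan2015book}), $\tropbar(\LFl(r,s,S;n)) = V(\trop(I))$ where $I$ is the ideal generated by $\mathscr{P}_{r;n}$, $\mathscr{P}_{s;n}$ and $\mathscr{P}_{r,s;S;n}$; since $\mu,\nu$ are already assumed realizable, the Grassmann--Pl\"ucker relations $\mathscr{P}_{r;n}^{\trop}$ and $\mathscr{P}_{s;n}^{\trop}$ are automatically satisfied by $\mu$ and $\nu$, so the only genuine constraint is $\mathscr{P}_{r,s;S;n}^{\trop}$. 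Thus $\mu \times \nu \in \tropbar(\LFl(r,s,S;n))$ is equivalent, using realizability and Proposition~\ref{prop: linear degenerate pluecker relations} applied over a suitable extension field, to the existence of realizations $U$ of $\mu$ (rank $r$) and $V$ of $\nu$ (rank $s$) with $\pr_S(U) \subseteq V$.

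Next I would translate the geometric condition $\pr_S(U) \subseteq V$ into the matroidal statement $\mu_S \twoheadleftarrow \nu$. The key observation is that $\pr_S(U)$ is the image of $U$ under the coordinate projection killing the $S$-columns: if $A$ is a matrix representing $U$, then $A'$ (with the $S$-columns zeroed out) represents $\pr_S(U)$, and the valuated matroid of $A'$ is exactly $\mu_S$ as defined in \eqref{eq: def of mu_S} --- the elements of $S$ become loops, and on the complement one recovers the deletion $\mu \setminus S$ (this uses Proposition-Definition~\ref{propdef: deletion of valuated matroid} to see that the valuated deletion agrees with the Pl\"ucker coordinates of $A'$, up to the choice of the auxiliary set $I$, and that different choices are equivalent). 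Then $\pr_S(U) \subseteq V$ says precisely that the row span of $A'$ is contained in the row span of $B$, which by the realizable case of valuated matroid quotients (the discussion following Definition~\ref{def: valuated flag matroid}, i.e.\ \cite[Example 4.1.2]{brandt2021tropicalflag}) is equivalent to $\mu(A') \twoheadleftarrow \mu(B)$, that is, $\mu_S \twoheadleftarrow \nu$ being a realizable quotient, with the realizations $A'$ and $B$ witnessing realizability simultaneously.

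Putting these together gives both implications: if $\mu \times \nu \in \tropbar(\LFl(r,s,S;n))$, pick realizations with $\pr_S(U) \subseteq V$ and conclude $\mu_S \twoheadleftarrow \nu$ realizably; conversely, a realizable quotient $\mu_S \twoheadleftarrow \nu$ supplies matrices $A'$ (realizing $\mu_S$, hence obtained by zeroing the $S$-columns of a matrix realizing $\mu$ --- one must check this can be arranged) and $B$ (realizing $\nu$) with row span containment, hence $\pr_S(U) \subseteq V$, and Proposition~\ref{prop: linear degenerate pluecker relations} together with the Fundamental Theorem places $\mu \times \nu$ in $\tropbar(\LFl(r,s,S;n))$.

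The main obstacle I anticipate is the bookkeeping around the auxiliary set $I$ in the deletion: showing that a realization of the \emph{direct sum} $\mu_S = (\mu\setminus S) \oplus U_{0,|S|}$ can always be taken to arise as the $S$-zeroed version $A'$ of a realization $A$ of $\mu$ itself --- i.e.\ that realizability of $\mu_S$ as an abstract valuated matroid is compatible with realizability of $\mu$ in a way that makes the projection $\pr_S$ literal rather than just abstract. Concretely, one needs that if $\mu$ is realized by $A$ then $\mu_S$ is realized by $A'$ (the easy direction), and conversely that any realization of $\mu_S$ differs from some such $A'$ only by the $GL$-action that does not affect which subspace $\pr_S(U)$ is --- here the fact that the $S$-elements are loops of $\mu_S$ forces the $S$-columns of any realization to be zero up to equivalence, so this should go through cleanly once stated carefully. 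The rank-accounting ($\rk(\mu\setminus S)$ versus $r$, and ensuring $\pr_S(U)$ has the expected dimension) is the other routine-but-delicate point, handled by Proposition-Definition~\ref{propdef: deletion of valuated matroid}.
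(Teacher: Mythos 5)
Your proposal follows essentially the same route as the paper: the Fundamental Theorem of Tropical Geometry to pass between points of $\tropbar(\LFl(r,s,S;n))$ and actual realizations, Proposition~\ref{prop: linear degenerate pluecker relations} to translate the linear degenerate Pl\"ucker relations into the containment $\pr_S(U)\subseteq V$, and the observation that the valuated matroid of $\pr_S(U)$ is exactly $\mu_S$ (the paper then invokes Theorem~\ref{thm: brandt equivalence for tropical flags} to convert $\pr_S(U)\subseteq V$ into the realizable quotient). The only place you diverge is in the converse, where you try to show that an arbitrary realization of $\mu_S$ can be taken to be the $S$-zeroed matrix $A'$ of a realization $A$ of $\mu$; your sketched justification --- that the $S$-elements are loops, so the $S$-columns vanish ``up to the $GL$-action'' --- does not establish this. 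It only shows that any realization $W$ of $\mu_S$ lies in the coordinate subspace $\{x_i=0 : i\in S\}$, not that $W$ equals $\pr_S(U)$ for some subspace $U$ realizing $\mu$; in general a realization of a deletion need not lift to a realization of the original (valuated) matroid, so this would require a genuine lifting argument. The paper sidesteps the issue entirely: it reads realizability of the quotient $\mu_S\twoheadleftarrow\nu$, together with realizability of $\mu$ and $\nu$, as directly furnishing realizations $U$ of $\mu$ and $V$ of $\nu$ with $\pr_S(U)\subseteq V$ --- the ``simultaneous'' realizability convention made explicit in Definition~\ref{def: ld flag matroid} --- and then simply runs Proposition~\ref{prop: linear degenerate pluecker relations} backwards. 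So either adopt that reading of ``realizable quotient,'' as the paper does, or be aware that your loop/$GL$ remark does not close the lifting step you correctly identified as the delicate point.
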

	\begin{proof}
		For a definition of realizable quotient of valuated matroids, see the paragraph after Definition \ref{def: valuated flag matroid}.
		
		If $\mu \times \nu \in \tropbar(\LFl(r,s,S;n))$, from the Fundamental Theorem of Tropical Geometry \cite{maclagan2015book}, there exist realizations $U$ of $\mu$ and $V$ of $\nu$ such that the Pl\"{u}cker coordinates of $U$ and $V$ are a point of $\LFl(r,s,S;n)$. From Proposition \ref{prop: linear degenerate pluecker relations} this implies that $\pr_S(U) \subseteq V$. Now note that, by definition, the valuated matroid of $\pr_S(U)$ is $\mu_S$, therefore, from Theorem \ref{thm: brandt equivalence for tropical flags} we have that $\pr_S(U) \subseteq V$ implies $\mu_S \twoheadleftarrow \nu$ and the last quotient is realizable.
		
		Conversely, assume that $\mu_S \twoheadleftarrow \nu$ is a realizable quotient. By definition, $\mu_S$ is realizable. Since $\mu$ and $\nu$ are both realizable by assumption, this means that there exist realizations $U$ of $\mu$ and $V$ of $\nu$ are such that $\pr_S(U) \subseteq V$.
		From Proposition \ref{prop: linear degenerate pluecker relations} this implies that the Pl\"{u}cker coordinates of $U$ and $V$ satisfy the linear degenerate Pl\"{u}cker relations. Therefore the Pl\"ucker coordinates of their valuated matroids $\mu$ and $\nu$ are tropicalization of the Pl\"{u}cker coordinates, that is $\mu \times \nu \in \tropbar(\LFl(r,s,S;n))$.
	\end{proof}

	\begin{definition}[Linear degenerate flag Dressian]\label{def:ld_flag_dressian}
		The \defemph{linear degenerate flag Dressian} of rank $\mathbf{r}$ and degeneration type $\mathbf{S}$ is the tropical prevariety
		\[  \LFlDr(\mathbf{r},\mathbf{S};n) \subseteq \mathbb{P}(\mathbb{T}^{\binom{n}{r_1}}) \times \dots \times \mathbb{P}(\mathbb{T}^{\binom{n}{r_k}}) \]
		given by the intersection of the tropical hypersurfaces of the tropical polynomials in $\{ \mathscr{P}^{\trop}_{r_i;n} \}_{1 \leq i \leq k} \cup \{ \mathscr{P}^{\trop}_{r_i,r_j;S_{ij};n} \}_{1 \leq i < j \leq k}$.
	\end{definition}
	
	Now, for $S \subseteq [n]$, define the projection map $\pr_S^{\trop}: \mathbb{T}^n \rightarrow \mathbb{T}^n$ by
	\[ \bigg( \pr_S^{\trop}(x_1,\dots,x_n) \bigg)_i = \begin{cases}
		x_i & \text{if } i \notin S, \\
		\infty & \text{if } i \in S.
	\end{cases} \]
	The projection $\pr_S^{\trop}$ does not give us a well-defined map on the tropical projective space $\mathbb{P}(\mathbb{T}^n)$. Denote by $\varphi:\mathbb{T}^n \setminus \{ (\infty, \dots, \infty) \} \rightarrow \mathbb{P}(\mathbb{T}^n)$ the natural quotient map. By abuse of notation, for a subset $X \subseteq \mathbb{P}(\mathbb{T}^n)$ we set
	\[ \pr_S^{\trop}(X) = \varphi \Big( \pr_S^{\trop} \big( \varphi^{-1}(X) \big) \setminus \{ (\infty,\dots,\infty) \} \Big). \]
	Alternatively, $\pr_S^{\trop}(X)$ can be equivalently defined as the image of $X \subseteq \mathbb{P}(\mathbb{T}^n)$ under the restriction of $\pr_S^{\trop}$ where it is defined on $\mathbb{P}(\mathbb{T}^n)$.
	%\victoria{Victoria:: Move ex 3.6 after def 3.7 and say that this is also the flag dressian??} \alessio{COMMENT: I moved it since we use $\pr^{\trop}$ that was defined later. I would avoid to mention tropical bases and equality with the l.d. Dressian as, even though equality probably holds true in this case, I don't think we can be 100\% sure and we don't have a proof of it - VICTORI: I have confirmed with gfan that it is!}
	
	\begin{example}\label{ex: linear degenerate section 3}
		In this example we describe the tropicalization of the linear degenerate flag variety 
		$\LFl((1,2),\{ 1 \};4)$, parametrizing tuples $(v,L)$ of points $v$ and lines $L$ in $\mathbb{P}^3$ such that $\pr_1(v)\subseteq L$. By definition
		$ \LFl((1,2),\{1\};4) = V ( \mathscr{P}_{2;4} \cup \mathscr{P}_{1,2;\{1\};4} ), $
		since $\mathscr{P}_{1;4}$ contains just the zero polynomial. We verify computationally that the tropicalizations of the linear degenerate Pl\"ucker relations given below form a tropical basis, i.e. generate $\trop(\LFl((1,2),\{ 1 \};4))$.
		\begin{align*}
			\mathscr{P}_{2;4}^{\trop} & = \left\{ p_{1,4}p_{2,3} \oplus p_{1,3}p_{2,4} \oplus p_{1,2}p_{3,4} \right\}, \\
			\mathscr{P}_{1,2;\{1\};4}^{\trop} & = \left\{ \begin{array}{l}
				p_{3}p_{1,2} \oplus p_{2}p_{1,3}, \\
				p_{4}p_{1,2} \oplus p_2p_{1,4}, \\
				p_{4}p_{1,3} \oplus p_3p_{1,4}, \\
				p_{4}p_{2,3}\oplus p_3p_{2,4} \oplus p_2p_{3,4}.
			\end{array} \right\}
		\end{align*}
		Note that the polynomials in $\mathscr{P}_{1,2;\{1\};4}^{\trop}$ are obtained from those in $\mathscr{P}_{1,2;4}^{\trop}$ by deleting all monomials containing $p_1$ (compare with Example \ref{ex: Fl 1,2;4}).
		The tropicalization $\trop \big( \LFl((1,2),\{1\};4) \big)$ can be computed in Macaulay2 \cite{M2} and is a 5-dimensional simplicial fan in ${\mathbb{P}(\mathbb{T}^4)}\times {\mathbb{P}(\mathbb{T}^6)}$. Its lineality space has dimension $4$ and the quotient of the variety by the lineality space has f-vector $(1,3)$.
		A point $p$ in the tropical linear degenerate flag variety $\trop \big( \LFl((1,2),\{1\};4) \big)$ corresponds to an arrangement of a point $v_p = (v_1:v_2:v_3:v_4)$ and a general tropical line $L_p$ in ${\mathbb{P}(\mathbb{T}^4)}$ such that $\pr_{e_1}(v_p)\in L_p$. This last condition implies that $L_p$ contains the point $(\infty:v_2:v_3:v_4)$. This occurs only if $L_p$ has an unbounded edge $l_1$ in (projective) coordinate direction $e_1$ at $(x:v_2:v_3:v_4)$ for some $x\in\mathbb{R}$. The direction of the unbounded edge adjacent to $l_1$ can be chosen to be any of (projective) coordinate directions $e_2, e_3$ and $e_4$. The balancing condition fixes the directions of the remaining unbounded edges, as in Example \ref{ex: Fl 1,2;4}. The three choices for the direction vector correspond to the three maximal cones.
	\end{example}
	
	\begin{definition}[Linear degenerate tropical flag]
		A \defemph{linear degenerate tropical flag} of degeneration type $\mathbf{S} = (S_1,\dots,S_{k-1})$, with $S_i \subseteq [n]$, is a sequence of tropical linear spaces $(\tropbar(\mu_1),\dots,\tropbar(\mu_k))$ in $\mathbb{P}(\mathbb{T}^n)$ such that for all $i\in \{1,\dots, k-1\}$ we have $\pr_{S_i}^{\trop}(\tropbar(\mu_i)) \subseteq \tropbar(\mu_{i+1})$.
	\end{definition}
	
	\noindent
	A picture of a linear degenerate tropical flag can be found in Figure \ref{fig: tropical flag}(b).
	
	To show that points in the linear degenerate flag Dressian parametrize linear degenerate tropical flags, and thus $(a) \Leftrightarrow (c)$ in Theorem \ref{thm:main_valuated_ld_flag_correspondence}, we give an equivalent definition of tropical linear spaces in terms of \emph{cocircuits}.
	
	\begin{definition}\label{def: valuated cocircuit}
		The \defemph{dual} of a valuated matroid $\mu$ is the valuated matroid $\mu^*$ defined by $\mu^*(I) = \mu([n] \setminus I)$ for all $I \in \binom{[n]}{d}$. The \defemph{valuated cocircuits} of $\mu$ are the valuated circuits of $\mu^*$. For each $I \in \binom{[n]}{r-1}$ define $C^*_\mu(I) \in \mathbb{T}^n$ by
		\[ C^*_\mu(I)_i = \begin{cases}
			\mu(I \cup i)  & i \notin I, \\
			\infty & i \in I.
		\end{cases}\]
		The set of \defemph{valuated cocircuits} $\mathcal{C}^*(\mu)$ is the image in $\mathbb{P}(\mathbb{T}^n)$ of the following set:
		\[ \left\{ C^*_\mu(I) : I \in \binom{[n]}{r-1} \right\} \setminus \{ (\infty,\dots,\infty) \}. \] 
	\end{definition}
	
	Let $\mu$ be a valuated matroid on $[n]$. Its \emph{tropical linear space} can be equivalently defined as the span of its cocircuits (see, for instance,  \cite[Theorem B]{brandt2021tropicalflag}):
	\begin{equation}\label{eq: trop from cocircuits}
		\tropbar(\mu) = \displaystyle \left\{ \bigoplus_{C \in \mathcal{C}^*(\mu)} \lambda_C \odot C : \lambda_C \in \mathbb{T}, \lambda_C \neq \infty \right\}.
	\end{equation}

	\begin{proposition}\label{prop: b iff c}
		Let $\mu$ and $\nu$ be two valuated matroids on a common ground set $[n]$, of rank $r$ and $s$ respectively, and let $S \subseteq [n]$. The following statements are equivalent:
		\begin{enumerate}[label=(\arabic*)]
			\item $\mu \times \nu \in \LFlDr(r,s,S;n)$,
			\item $\pr_S^{\trop}(\tropbar(\mu)) \subseteq \tropbar(\nu)$.
		\end{enumerate}
	\end{proposition}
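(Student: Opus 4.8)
The plan is to rewrite each linear degenerate Pl\"ucker relation, evaluated at $\mu \times \nu$, as a tropical orthogonality statement between a projected cocircuit of $\mu$ and a valuated circuit of $\nu$; this recasts condition $(1)$ as a ``cocircuit by cocircuit'' membership condition in $\tropbar(\nu)$, and then the linearity of $\pr_S^{\trop}$ together with the cocircuit description \eqref{eq: trop from cocircuits} of tropical linear spaces upgrades that condition to the global inclusion in $(2)$.

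For the first step, fix $I \in \binom{[n]}{r-1}$ and $J \in \binom{[n]}{s+1}$ and evaluate the generator of $\mathscr{P}^{\trop}_{r,s;S;n}$ indexed by $(I,J)$ at $\mu \times \nu$. Since every sign $\sign(j;I,J) = \pm 1$ has valuation $0$, this generator evaluates to the tropical sum $\bigoplus_{j \in J \setminus (I \cup S)} \mu(I \cup j) \odot \nu(J \setminus j)$. Now $\mu(I \cup j) = \big( \pr_S^{\trop}(C^*_\mu(I)) \big)_j$ for $j \notin I \cup S$, while $\big( \pr_S^{\trop}(C^*_\mu(I)) \big)_j = \infty$ for $j \in I \cup S$; likewise $\nu(J \setminus j) = C_\nu(J)_j$ for $j \in J$ and $C_\nu(J)_j = \infty$ otherwise. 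Hence the displayed tropical sum equals $\bigoplus_{i \in [n]} \big( \pr_S^{\trop}(C^*_\mu(I)) \big)_i \odot C_\nu(J)_i$, and the condition that its minimum is attained at least twice (or is $\infty$) is exactly $\pr_S^{\trop}(C^*_\mu(I)) \in V\big( \bigoplus_i C_\nu(J)_i \odot x_i \big)$. Letting $J$ range over $\binom{[n]}{s+1}$ and invoking Definition \ref{def: tropicallinearspace}, the relations indexed by this fixed $I$ all vanish on $\mu \times \nu$ if and only if $\pr_S^{\trop}(C^*_\mu(I)) \in \tropbar(\nu)$, with the convention that this is vacuously true when $\pr_S^{\trop}(C^*_\mu(I))$ is the all-$\infty$ vector (which matches the $\infty$-convention in the definition of a tropical hypersurface). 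Since $\mu$ and $\nu$ are valuated matroids, the Grassmann--Pl\"ucker relations appearing in Definition \ref{def:ld_flag_dressian} hold automatically, so altogether $\mu \times \nu \in \LFlDr(r,s,S;n)$ if and only if $\pr_S^{\trop}(C^*_\mu(I)) \in \tropbar(\nu)$ for every $I \in \binom{[n]}{r-1}$.

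For the second step, note that $\pr_S^{\trop}$ commutes with tropical linear combinations: coordinatewise it is either the identity or the constant map to $\infty$, and both commute with $\min$. Using the cocircuit description \eqref{eq: trop from cocircuits} of $\tropbar(\mu)$, it follows that $\pr_S^{\trop}(\tropbar(\mu))$ is precisely the set of tropical linear combinations of the vectors $\pr_S^{\trop}(C^*_\mu(I))$, $I \in \binom{[n]}{r-1}$ (all-$\infty$ vectors contributing nothing). On the other hand $\tropbar(\nu)$ is closed under tropical linear combinations, again by \eqref{eq: trop from cocircuits}. Therefore $\pr_S^{\trop}(\tropbar(\mu)) \subseteq \tropbar(\nu)$ holds if and only if each generator $\pr_S^{\trop}(C^*_\mu(I))$ lies in $\tropbar(\nu)$, and by the first step this is equivalent to $\mu \times \nu \in \LFlDr(r,s,S;n)$, proving $(1) \Leftrightarrow (2)$.

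The part requiring the most care is the bookkeeping of $\infty$-coordinates: one has to verify that a projected cocircuit $\pr_S^{\trop}(C^*_\mu(I))$ collapsing to the all-$\infty$ vector corresponds precisely to a linear degenerate relation that holds trivially, and that the quotient conventions for $\mathbb{P}(\mathbb{T}^n)$, on which $\pr_S^{\trop}$ is only partially defined, cause no trouble. It is also worth recording the cleaner geometric statement underlying the argument, namely $\pr_S^{\trop}(\tropbar(\mu)) = \tropbar(\mu_S)$ for $\mu_S$ as in \eqref{eq: def of mu_S}; with this identity condition $(2)$ becomes $\tropbar(\mu_S) \subseteq \tropbar(\nu)$, and an alternative route to the proposition follows by applying Theorem \ref{thm: brandt equivalence for tropical flags} to the pair $(\mu_S, \nu)$ after relating the linear degenerate incidence relations for $(\mu,\nu)$ to the ordinary incidence relations for $(\mu_S,\nu)$.
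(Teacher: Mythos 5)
Your proposal is correct and follows essentially the same route as the paper: it rewrites each linear degenerate incidence relation evaluated at $\mu \times \nu$ as the tropical orthogonality condition $\pr_S^{\trop}(C^*_\mu(I)) \in V\big(\bigoplus_i C_\nu(J)_i \odot x_i\big)$, concludes that membership in $\LFlDr(r,s,S;n)$ is equivalent to all projected cocircuits of $\mu$ lying in $\tropbar(\nu)$, and then passes to the full inclusion via the cocircuit span description \eqref{eq: trop from cocircuits}. The only cosmetic difference is that you justify closure of $\tropbar(\nu)$ under tropical linear combinations directly from \eqref{eq: trop from cocircuits}, where the paper cites tropical convexity of tropical linear spaces; your $\infty$-bookkeeping and the concluding remark about $\pr_S^{\trop}(\tropbar(\mu)) = \tropbar(\mu_S)$ match the paper's conventions and its Proposition \ref{lem:projection of tropical linear space}.
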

	\begin{proof}
		Both $\mu$ and $\nu$ satisfy their respective tropical Grassmann-Pl\"{u}cker relations if and only if $\mu$ and $\nu$ are valuated matroids.
		
		Now $\mu \times \nu \in\LFlDr(r,s,S;n)$ if and only if for every $I \in \binom{[n]}{r-1}$, $J \in \binom{[n]}{s+1}$ the minimum in
		\begin{equation*}
			\bigoplus_{j \in J \setminus (I \cup S)} p_{I \cup j} p_{J \setminus j}
		\end{equation*}
		is achieved at least twice. Since, from Definition \ref{def: valuated cocircuit}, we have
		\[ \bigg(\pr_S^{\trop} \big( C^*_\mu(I) \big) \bigg)_j = \begin{cases}
			\mu(I \cup j) & \text{if } j \notin I \cup S \\
			\infty & \text{otherwise},
		\end{cases} \]
		the above statement is equivalent to requiring that, for every  $I \in \binom{[n]}{r-1}$ and $J \in \binom{[n]}{s+1}$, the minimum in
		\[ \begin{split}
			\bigoplus_{j \in J \setminus (I \cup S)} \mu(I \cup j) \odot \nu(J \setminus j) & = \bigoplus_{j \in J \setminus (I \cup S)} C_\mu^*(I)_j \odot C_\nu(J)_j \\
			& = \bigoplus_{j \in [n]} \bigg(\pr_S^{\trop} \big( C^*_\mu(I) \big) \bigg)_j \odot C_\nu(J)_j
		\end{split} \]
		is achieved at least twice. This holds true if and only if for every valuated cocircuit $C_\mu^*(I)$ of $\mu$ and every valuated circuit $C_\nu(J)$ of $\nu$, we have
		\[ \pr_S(C^*_\mu(I)) \in V \left( \bigoplus_{j \in [n]} C_\nu(J)_j \odot x_i\right). \]
		The above statement is equivalent to $\pr_S(C^*_\mu(I)) \in \tropbar(\nu)$ for every valuated cocircuit $C^*_\mu(I)$. By \eqref{eq: trop from cocircuits} and the fact that tropical linear spaces are tropically convex, see \cite{develin2004tropical}, this is equivalent to $\pr_S^{\trop}(\tropbar(\mu)) \subseteq \tropbar(\nu)$.
	\end{proof}	    
	
	This proves  $(a) \Leftrightarrow (c)$ in Theorem \ref{thm:main_valuated_ld_flag_correspondence}.
	
	\begin{remark}\label{rmk: Diane's remark}
		One could define the linear degenerate flag Dressian with just the ``consecutive" incidence Pl\"{u}cker relations $\mathscr{P}_{r_i,r_{i+1};S_{i,i+1};n}^{\trop}$, instead of taking all the relations $\mathscr{P}_{r_i,r_{j};S_{i,j};n}^{\trop}$ for $1 \leq i < j \leq n$. One of the consequences of the previous proposition is that these two a priori different ways of defining the linear degenerate flag Dressian give rise to the same tropical prevariety.
	\end{remark}
	
	\begin{remark}\label{rmk: Grassmannians and Dressians are different}
		As observed in Remark \ref{rmk:contrasting dressian and grassmannian} for the flag Dressian and the flag variety, the tropicalization of the linear degenerate flag variety and the linear degenerate flag Dressian are, in general, different. This follows as the tropical flag variety and the flag Dressian differ for $n\geq 6$, see \cite[Example 5.2.4]{brandt2021tropicalflag}. Further, even the linear degenerate flag varieties with highest degeneration (i.e. $S_i =[n]$ for all $i$ in Definition \ref{def:ld_flag_dressian}) differ from their linear degenerate flag Dressians, as they are products of tropicalized Grassmannians and products of Dressians respectively, and Dressians and tropical Grassmannians are different for large enough $k$ and $n$. 
	\end{remark}
	\subsection{Linear degenerate valuated flag matroids} \label{sec:ld flags of projective spaces}
	
	In this section, we prove the equivalence $(b) \Leftrightarrow (c)$ of Theorem \ref{thm:main_valuated_ld_flag_correspondence} and Theorem \ref{thm:main_valuated_ld_flag_correspondence_realizable}, that is, (realizable) linear degenerate valuated flag matroids correspond to (realizable) linear degenerate tropical flags. We begin by defining linear degenerate valuated flag matroids. Again, for a valuated matroid $\mu$ on $[n]$ and a subset $S \subseteq [n]$ we are going to use our notation
	$\mu_S$ defined in \eqref{eq: def of mu_S}.	
	More explicitly, the valuated circuits of the deletion $\mu\setminus S$ are
	\begin{align}\label{thm:valuated circuits deletion contraction}
		\mathcal{C}(\mu \setminus S) & = \left\{ C_{|[n] \setminus S} : C \in \mathcal{C}(\mu), \supp(C) \subseteq [n] \setminus S \right\}. %\\
		%\mathcal{C}(\mu \,/\, S) & = \left\{ C_{| [n] \setminus S} \text{ of minimal support} : C \in \mathcal{C}(\mu) \right\}.
	\end{align} 
	This is a direct consequence of \cite[Theorem 3.1]{murota2001circuitvaluation}. For the formulation used here, see \cite[Theorem 3.1.6]{brandt2021tropicalflag}.
	
	\begin{definition}[Linear degenerate valuated flag matroid]\label{def: ld flag matroid}
		A sequence of valuated matroids $\boldsymbol{\mu} = (\mu_1,\dots,\mu_k)$  on $[n]$ is a \defemph{linear degenerate valuated flag matroid} if for all $i\in \{1,\dots, k-1\}$, there exists $S_i \subseteq [n]$ such that $(\mu_i)_{S_i} \twoheadleftarrow \mu_{i+1}$. Further, $\boldsymbol{\mu}$ is \defemph{realizable} if all $\mu_i$ are realizable and the quotients $(\mu_i)_{S_i} \twoheadleftarrow \mu_{i+1}$ are \emph{simultaneously} realizable for all $i \in \{ 1,\dots,k-1\}$, i.e. if there exists a realization $L_i$ for each $\mu_i$ such that the realization $\pr_{S_i}(L_i)$ of $(\mu_i)_{S_i}$ is contained in $L_{i+1}$, using the same realization $L_i$ for each occurence of the matroid $\mu_i$.
	\end{definition}

	\begin{proposition}
		\label{lem:projection of tropical linear space}
		Let $\mu$ be a valuated matroid on the ground set $[n]$ and let $S\subseteq [n]$. Then
		\[ \pr^{\trop}_{\{S\}}(\tropbar(\mu)) = \tropbar\left( \mu \setminus S \right) \times \{ \infty \}^{ \{ S \} } = \tropbar(\mu_S) . \]
	\end{proposition}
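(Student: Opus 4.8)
The plan is to prove the two displayed equalities separately, the main input being the description of a tropical linear space as the tropical span of its valuated cocircuits, equation \eqref{eq: trop from cocircuits}, together with the formula \eqref{thm:valuated circuits deletion contraction} for the valuated circuits of a deletion and the formula for the deletion itself in Proposition-Definition \ref{propdef: deletion of valuated matroid}. Throughout I would set $k = \rk(\mu \setminus S)$ and fix a set $I_0 \in \binom{S}{r-k}$ as in that Proposition-Definition, so that $(\mu \setminus S)(B) = \mu(B \cup I_0)$ for every $B \in \binom{[n]\setminus S}{k}$.

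I would first dispatch the second equality $\tropbar(\mu \setminus S) \times \{\infty\}^S = \tropbar(\mu_S)$. Since $\mu_S$ realises the elements of $S$ as loops, a direct computation from Definition \ref{def: valuated cocircuit} and \eqref{eq: def of mu_S} shows that $C^*_{\mu_S}(I)$ is the all-$\infty$ vector whenever $I \cap S \neq \emptyset$, whereas for $I \in \binom{[n]\setminus S}{k-1}$ one has $C^*_{\mu_S}(I)_j = (\mu \setminus S)(I \cup j)$ for $j \in ([n]\setminus S)\setminus I$ and $C^*_{\mu_S}(I)_j = \infty$ for $j \in S$. Hence $\mathcal{C}^*(\mu_S) = \mathcal{C}^*(\mu \setminus S) \times \{\infty\}^S$ (viewing a cocircuit of $\mu \setminus S$ inside $\mathbb{T}^n$ by appending $\infty$ on the coordinates of $S$), and the equality follows from \eqref{eq: trop from cocircuits}. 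The degenerate case $k = 0$, where both sides are empty, is immediate.

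For the inclusion $\pr^{\trop}_S(\tropbar(\mu)) \subseteq \tropbar(\mu_S)$, I would take $x \in \tropbar(\mu)$ with $\pr^{\trop}_S(x) \neq (\infty,\dots,\infty)$, so that $x' := x|_{[n]\setminus S}$ is not the all-$\infty$ vector and $\pr^{\trop}_S(x) = x' \times \{\infty\}^S$, and I would show $x' \in \tropbar(\mu \setminus S)$. By Definition \ref{def: tropicallinearspace} this amounts to checking that for every valuated circuit $\tilde D$ of $\mu \setminus S$ the minimum of $\tilde D_j + x'_j$ over $j \in [n]\setminus S$ is attained at least twice; by \eqref{thm:valuated circuits deletion contraction} I may write $\tilde D = D|_{[n]\setminus S}$ for a valuated circuit $D$ of $\mu$ with $\supp(D) \subseteq [n]\setminus S$, and since $x \in \tropbar(\mu)$ the minimum of $D_j + x_j$ over all $j \in [n]$ is attained at least twice, which (because $D_j = \infty$ for $j \in S$) already happens over $j \in [n]\setminus S$. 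Then $\pr^{\trop}_S(x) = x' \times \{\infty\}^S \in \tropbar(\mu \setminus S) \times \{\infty\}^S = \tropbar(\mu_S)$ by the previous paragraph.

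For the reverse inclusion, by the second equality and \eqref{eq: trop from cocircuits} every point of $\tropbar(\mu_S)$ is a tropical linear combination, with finite coefficients, of the vectors $C^*_{\mu \setminus S}(I) \times \{\infty\}^S$ for $I \in \binom{[n]\setminus S}{k-1}$. Using $(\mu \setminus S)(I \cup j) = \mu(I \cup j \cup I_0)$ and $I_0 \subseteq S$, I would check that the valuated cocircuit $C^*_\mu(I \cup I_0)$ of $\mu$ restricts on $[n]\setminus S$ exactly to $C^*_{\mu \setminus S}(I)$; in particular it is not the all-$\infty$ vector, hence a genuine valuated cocircuit of $\mu$. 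Taking the tropical combination of the cocircuits $C^*_\mu(I \cup I_0)$ with the same coefficients then yields, via \eqref{eq: trop from cocircuits}, a point $x \in \tropbar(\mu)$; it is not the all-$\infty$ vector, since its restriction to $[n]\setminus S$ is the given combination of the $C^*_{\mu \setminus S}(I)$, and $\pr^{\trop}_S(x)$ is the chosen point of $\tropbar(\mu_S)$, because $\pr^{\trop}_S$ respects $\oplus$ and $\odot$ and sends precisely the coordinates in $S$ to $\infty$. I expect the only genuinely delicate step to be this last one — the index bookkeeping behind $C^*_\mu(I \cup I_0)|_{[n]\setminus S} = C^*_{\mu \setminus S}(I)$ — and it reduces directly to the defining identity of the deletion in Proposition-Definition \ref{propdef: deletion of valuated matroid}; the remaining verifications are routine unwindings of the definitions.
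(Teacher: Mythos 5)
Your proposal is correct, but you handle the crucial reverse inclusion by a genuinely different argument than the paper's. The paper first reduces to $S=\{s\}$ by induction and, given $v\in\tropbar(\mu\setminus s)\times\{\infty\}^{\{s\}}$, constructs a preimage by inserting a suitable finite value $t$ in the $s$-th coordinate; checking that the resulting point lies in $\tropbar(\mu)$ then requires a two-case contradiction argument based on vector elimination \cite[Theorem 3.4]{murota2001circuitvaluation}. You instead lift each cocircuit $C^*_{\mu\setminus S}(I)$ of the deletion to the cocircuit $C^*_\mu(I\cup I_0)$ of $\mu$ (the bookkeeping $C^*_\mu(I\cup I_0)|_{[n]\setminus S}=C^*_{\mu\setminus S}(I)$ is indeed immediate from Proposition-Definition \ref{propdef: deletion of valuated matroid}, since $|I\cup I_0|=r-1$ and $(\mu\setminus S)(I\cup j)=\mu(I\cup j\cup I_0)$), and then transport a cocircuit representation of the given point through $\pr_S^{\trop}$, which respects $\oplus$ and $\odot$. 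This avoids the elimination-axiom case analysis and the one-element-at-a-time induction, at the price of leaning on the span description \eqref{eq: trop from cocircuits}: as literally stated there (all coefficients finite, running over all cocircuits), your combination of only the special cocircuits $C^*_\mu(I\cup I_0)$ is not of that exact form, so you should either allow coefficients in $\mathbb{T}$ that are not all $\infty$, or invoke tropical convexity of $\tropbar(\mu)$ \cite{develin2004tropical}, exactly as the paper does in the proof of Proposition \ref{prop: b iff c}; this is a one-line fix, not a gap. Your forward inclusion and your proof of $\tropbar(\mu\setminus S)\times\{\infty\}^S=\tropbar(\mu_S)$ coincide with the paper's (your cocircuit computation is, if anything, a cleaner justification of the latter, which the paper phrases via loops). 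What the paper's route buys is an explicit preimage differing from the given point only in the deleted coordinate; what yours buys is brevity, no appeal to circuit elimination, and a treatment of arbitrary $S$ in one stroke.
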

	\begin{proof}
		The second equality follows from the definition of $\mu_S$. In fact, let $s$ be a loop. Then, $s$ is not contained in any basis and thus, any circuit $C$ has entry $C_s = \infty$. As any vector $v$ is a (tropical) span of circuits, this implies that every $v$ has entry $v_s = \infty$. 
		
		Now we prove the first equality. First, we note that we can restrict to the case $S=\{s\}$ and obtain the result for arbitrary $S$ by inductively re-applying the one-element case.
		
		Let $v \in \tropbar(\mu)$. Then the minimum in $\{ C_i + v_i \}_{i \in [n]}$ is achieved at least twice for every $C \in \mathcal{C}(\mu)$. In particular, the minimum in $\{ C_i + v_i \}_{i \in [n] \setminus s}$ is achieved at least twice for every $C \in \mathcal{C}(\mu)$ where $\supp(C) \subseteq [n] \setminus s$. From \eqref{thm:valuated circuits deletion contraction}, $\pr_{\{s\}}^{\trop}(v) \in \tropbar(\mu \setminus s ) \times \{ \infty \}^{\{s\}}$. This proves the first inclusion.
		
		For the reverse inclusion, let $v \in \tropbar(\mu \setminus s) \times \{ \infty \}^{\{s\}}$. Then, the minimum in $\{C_i + v_i \}_{i \in [n] \setminus s}$ is achieved at least twice for every $C \in \mathcal{C}(\mu \setminus s)$. From \eqref{thm:valuated circuits deletion contraction} this means it is achieved at least twice for every $C \in \mathcal{C}(\mu)$ with $\supp(C) \subseteq [n] \setminus s$. Now we want to find some $t \in \mathbb{T}$ such that the vector $\tilde{v} = (v_1,\dots,v_{s-1}, t, v_{s+1},\dots,v_n)\in \mathbb{P}(\mathbb{T}^n)$,  is in $\tropbar(\mu)$. Then
		$v = \pr_{\{s\}}^{\trop}(\tilde{v}) \in \pr^{\trop}_{\{s\}}(\tropbar(\mu))$.
		
		If for every $C \in \mathcal{C}(\mu)$ the minimum in $\{ C_i + v_i \}_{i \in [n] \setminus s}$ is achieved at least twice, %then 
		we can set $t = \infty$ and we are done. Therefore, we  assume that there exists a circuit $C \in \mathcal{C}(\mu)$ such that the minimum in $\{ C_i + v_i \}_{i \in [n] \setminus s}$ is achieved only once. Let $t \in \mathbb{R}$ such that $t + C_s = \min_{i \in [n] \setminus s} \{ C_i + v_i \}$. Then, the minimum in $\{ C_i + \tilde{v}_i \}_{i \in [n]}$ is achieved twice. We claim that $\tilde{v} \in \tropbar(\mu)$. 
		
		We proceed by contradiction. Let $C' \in \mathcal{C}(\mu)$ and assume that the minimum in $\{ C'_i + \tilde{v}_i \}_{i \in [n]}$ is achieved only once at the index $j \in [n]$. Up to tropical scalar multiplication we can assume that $C'_s = C_s \neq \infty$.  Suppose first that $j \neq s$. By construction, $v_i + C_i \geq t + C_s = t + C'_s > v_j + C'_j$ for every $i \in [n]$, in particular $C_j \neq C'_j$. On the other hand, we have $v_i + C'_i > v_j + C'_j$ for every $i \neq j$, therefore $v_i + \min(C_i,C'_i) > v_j + C'_j$ for every $i \neq j$. From \cite[Theorem 3.4]{murota2001circuitvaluation} there exists a vector $C''$ of $\mu$ such that $C''_s = \infty$, $C''_i \geq \min\{ C_i,C'_i \}$ for all $i \in [n]$ with equality whenever $C_i \neq C'_i$, in particular $C''_j = C'_j$. But now $\supp(C'') \subseteq [n] \setminus s$, so the minimum in $\{ C''_i + v_i \}_{i \in [n] \setminus s}$ has to be achieved at least twice, contradicting $v_i + C''_i \geq v_i + \min(C_i,C_i') > v_j + C'_j$ for every $i \neq j$.
		
		Now suppose that $j = s$. Let $k \in [n]$ be the index at which the minimum in $\{v_i + C_i\}_{i \in [n] \setminus s}$ is achieved. Then we have $v_k + C_k = t + C_s = t + C'_s < v_i + C'_i$ for every $i \neq s$, in particular $C_k < C'_k$. Now, applying \cite[Theorem 3.4]{murota2001circuitvaluation} again, we obtain a vector $C''$ such that $C''_s = \infty$, $C''_i \geq \min\{ C_i,C'_i \}$ for all $i \in [n]$ with equality whenever $C_i \neq C'_i$, in particular $C''_k = C_k$. Then, $v_k + C''_k = v_k + C_k < v_i + C'_i$ and further $v_k + C''_k = v_k + C_k < v_i + C_i$ for every $i \neq s,k$. This contradicts the fact that the minimum in $\{v_i + C''_i\}_{i \in [n] \setminus s}$ is achieved at least twice. 
	\end{proof}
	
	\begin{theorem}\label{thm:ld_tropical_flags_are_ld_matroids}
		Let $\mu$ and $\nu$ be valuated matroids on a common ground set $[n]$. The following statements are equivalent
		\begin{enumerate}[label=(\arabic*)]
			\item $\mu_S \twoheadleftarrow \nu$,
			\item $\pr^{\trop}_{S}(\tropbar(\mu)) \subseteq \tropbar(\nu)$.
		\end{enumerate}
	\end{theorem}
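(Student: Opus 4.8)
The plan is to deduce the equivalence directly from two facts already established above: Proposition~\ref{lem:projection of tropical linear space}, which computes the image of a tropical linear space under $\pr^{\trop}_S$, and Theorem~\ref{thm: brandt equivalence for tropical flags}, which recognizes valuated matroid quotients of a pair through containment of their tropical linear spaces.

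First I would rewrite statement~(2). By Proposition~\ref{lem:projection of tropical linear space} we have $\pr^{\trop}_S(\tropbar(\mu)) = \tropbar(\mu_S)$, where $\mu_S$ is the valuated matroid on $[n]$ obtained from $\mu\setminus S$ by adjoining the elements of $S$ as loops, as in \eqref{eq: def of mu_S}. Hence (2) is equivalent to the inclusion $\tropbar(\mu_S)\subseteq\tropbar(\nu)$ of tropical linear spaces in $\mathbb{P}(\mathbb{T}^n)$. Then I would apply Theorem~\ref{thm: brandt equivalence for tropical flags} to the length-two sequence $(\mu_S,\nu)$: the equivalence of its parts (b) and (c) says precisely that $\tropbar(\mu_S)\subseteq\tropbar(\nu)$ if and only if $\mu_S\twoheadleftarrow\nu$, which is statement~(1). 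Chaining these equivalences proves the theorem.

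\textbf{The one subtlety.} Theorem~\ref{thm: brandt equivalence for tropical flags} is stated for sequences with weakly increasing ranks, so before invoking it one must know $\rk(\mu_S)=\rk(\mu\setminus S)\le\rk(\nu)$. This costs nothing: we may assume it, since if $\rk(\mu_S)>\rk(\nu)$ then (1) fails (the definition of a valuated matroid quotient forces $\rk(\mu_S)\le\rk(\nu)$) and (2) fails as well (a containment $\tropbar(\mu_S)\subseteq\tropbar(\nu)$ would force $\rk(\mu_S)-1\le\rk(\nu)-1$ by comparing the dimensions of the two tropical linear spaces), so the equivalence holds trivially. Consequently the geometric substance is entirely contained in Proposition~\ref{lem:projection of tropical linear space}, and the only mild obstacle is this rank bookkeeping together with ensuring Proposition~\ref{lem:projection of tropical linear space} is used for an arbitrary subset $S$ — which its proof covers by reducing to the one-element case and iterating.
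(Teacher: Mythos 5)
Your proof is correct and is essentially the paper's own argument: the paper likewise combines Proposition~\ref{lem:projection of tropical linear space} (giving $\pr^{\trop}_S(\tropbar(\mu))=\tropbar(\mu_S)$) with the equivalence $(b)\Leftrightarrow(c)$ of Theorem~\ref{thm: brandt equivalence for tropical flags} applied to the pair $(\mu_S,\nu)$. Your extra remark on the rank bookkeeping is a harmless refinement, not a different route.
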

	\begin{proof}
		From Theorem \ref{thm: brandt equivalence for tropical flags} $(b) \Leftrightarrow (c)$ and Proposition \ref{lem:projection of tropical linear space}, we have $\mu_S \twoheadleftarrow \nu$ if and only if $\pr_S^{\trop}(\tropbar(\mu)) = \tropbar(\mu_S) \subseteq \tropbar(\nu)$.
	\end{proof}
	
	\subsection{Morphisms of valuated matroids}
	
	In this section, we outline how we can recast the definition of linear degenerate valuated flag matroids in terms of morphisms of valuated matroids (as defined in \cite[Remark 4.3.3]{brandt2021tropicalflag}) and prove  (b) $\Leftrightarrow$ (d) of Theorem \ref{thm:main_valuated_ld_flag_correspondence} and Theorem \ref{thm:main_valuated_ld_flag_correspondence_realizable}. The advantage of doing so is that this allows for further generalizations. For instance one can define a more general \defemph{quiver Dressian} by using morphisms of valuated matroids in place of linear maps between linear spaces.
	%It is still worth to mention quiver Dressians! (Even though you are arleady working on them. No worries about this, just tell people you are working on them to avoid someone independently does so)
	
	Let $M$ (or $\mu$) be a (valuated) matroid on the ground set $[m]$. Let $o$ be an element not in $[m]$. We denote by $M_o$ (or $\mu_o$) the matroid $M \oplus U_{0,1}$ (or $\mu \oplus U_{0,1}$) obtained by adding $o$ as a loop. In addition, let $N$ be a matroid on the ground set $[n]$. 
	
	A \defemph{morphism} (or \defemph{strong map}) of matroids $f:M \rightarrow N$ is a map of sets $f:[m] \cup \{o\} \rightarrow [n] \cup \{o\}$ such that $f(o) = o$ and the inverse image of a flat in $N_o$ is a flat in $M_o$. Morphisms of matroids can be characterized in terms of matroid quotients.
	
	\begin{definition}
		Let $f:[m] \rightarrow [n]$ be a map of sets and $N$ a matroid over $[n]$. The \defemph{induced} matroid $f^{-1}(N)$ on $[m]$ is defined by $\rk_{f^{-1}(N)}(A) = \rk_N(f(A))$ for every $A \subseteq [m]$.
	\end{definition}
	
	\begin{lemma}[{\cite[Lemma 2.4]{eur2020morphisms}}]
		The map of sets $f:[m] \cup \{o\} \rightarrow [n] \cup \{o\}$ with $f(o) = o$ is a morphism of matroids if and only if $f^{-1}(N_o) \twoheadleftarrow M_o$.
	\end{lemma}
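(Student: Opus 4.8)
The plan is to show that both sides of the equivalence reduce to the single statement: $f^{-1}(F)$ is a flat of $M_o$ for every flat $F$ of $N_o$. By the definition of a morphism of matroids recalled above, $f$ being a morphism is \emph{exactly} this statement, so all the work lies in identifying the flats of the induced matroid $f^{-1}(N_o)$.

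First I would compute the closure operator of a general induced matroid. Let $g:E \to E'$ be a map of sets, let $N'$ be a matroid on $E'$, and let $g^{-1}(N')$ be the induced matroid on $E$, so $\rk_{g^{-1}(N')}(A) = \rk_{N'}(g(A))$ for all $A \subseteq E$. For $A \subseteq E$ and $e \in E$, unwinding definitions gives $e \in \operatorname{cl}_{g^{-1}(N')}(A)$ iff $\rk_{N'}(g(A)\cup\{g(e)\}) = \rk_{N'}(g(A))$ iff $g(e) \in \operatorname{cl}_{N'}(g(A))$, hence
\[ \operatorname{cl}_{g^{-1}(N')}(A) = g^{-1}\big(\operatorname{cl}_{N'}(g(A))\big). \]

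Next I would use this identity to show that the flats of $g^{-1}(N')$ are precisely the preimages $g^{-1}(F)$ of flats $F$ of $N'$. For one direction, if $F$ is a flat of $N'$ then, using $g(g^{-1}(F)) \subseteq F$ and monotonicity of closure, $\operatorname{cl}_{g^{-1}(N')}(g^{-1}(F)) = g^{-1}(\operatorname{cl}_{N'}(g(g^{-1}(F)))) \subseteq g^{-1}(\operatorname{cl}_{N'}(F)) = g^{-1}(F)$, and the reverse inclusion holds by extensiveness of closure, so $g^{-1}(F)$ is a flat. Conversely, every flat of $g^{-1}(N')$ has the form $\operatorname{cl}_{g^{-1}(N')}(A) = g^{-1}(\operatorname{cl}_{N'}(g(A)))$, which is the preimage of the flat $\operatorname{cl}_{N'}(g(A))$ of $N'$.

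Finally I would apply this with $g = f$ and $N' = N_o$, noting that the ground sets match ($f^{-1}(N_o)$ and $M_o$ both live on $[m]\cup\{o\}$, and $f(o)=o$ keeps $o$ a loop on both sides). Then the flats of $f^{-1}(N_o)$ are exactly the sets $f^{-1}(F)$ with $F$ a flat of $N_o$, so the condition ``every flat of $f^{-1}(N_o)$ is a flat of $M_o$'', i.e. $f^{-1}(N_o) \twoheadleftarrow M_o$, is word for word the defining condition for $f$ to be a morphism of matroids. I do not expect a serious obstacle here; the only point requiring care is that $g(g^{-1}(F))$ is merely contained in $F$, which is why one of the inclusions in the flat computation must be finished off using extensiveness of the closure operator rather than a direct equality.
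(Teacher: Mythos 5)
Your argument is correct. Note that the paper itself does not prove this statement but cites it from \cite[Lemma 2.4]{eur2020morphisms}; your write-up supplies a complete, self-contained verification along the standard lines: the identity $\operatorname{cl}_{g^{-1}(N')}(A) = g^{-1}\bigl(\operatorname{cl}_{N'}(g(A))\bigr)$ follows directly from the rank definition of the induced matroid, it correctly yields that the flats of $f^{-1}(N_o)$ are exactly the preimages $f^{-1}(F)$ of flats $F$ of $N_o$ (with the one delicate inclusion handled, as you note, via $g(g^{-1}(F))\subseteq F$ together with extensiveness of the closure), and from there the equivalence between the quotient condition $f^{-1}(N_o)\twoheadleftarrow M_o$ and the flat-preimage definition of a morphism is immediate. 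The only point you take on faith, as does the paper's definition, is that the rank function $A\mapsto \rk_{N'}(g(A))$ indeed defines a matroid; this is standard and does not affect the argument.
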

	
	Now we use the above characterization to extend the definition of morphism to valuated matroids, using quotients of valuated matroids.

	\begin{proposition}
		Let $f:[m] \rightarrow [n]$ be a surjective map of sets and let $\nu$ be a valuated matroid on $[n]$ of rank $r$ with underlying matroid $N$. The function $f^{-1}(\nu) : \binom{[m]}{r} \rightarrow \mathbb{T}$ defined by
		\[ f^{-1}(\nu)(I) = \begin{cases}
			\nu(f(I)) & \text{if $I$ is a basis of } f^{-1}(N), \\
			\infty & \text{otherwise,}
		\end{cases} \]
		is a valuated matroid with underlying matroid $f^{-1}(N)$.
	\end{proposition}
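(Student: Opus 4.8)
The plan is to verify the valuated Grassmann--Pl\"ucker relations (Definition~\ref{def:valuated matroid}) directly for $f^{-1}(\nu)$, and then to check that the nonvanishing locus is exactly the basis set of $f^{-1}(N)$. The underlying-matroid claim is the easier half: by definition $f^{-1}(\nu)(I)\neq\infty$ iff $I$ is a basis of $f^{-1}(N)$, so once we know $f^{-1}(\nu)$ satisfies the exchange inequalities it is a valuated matroid whose support bases are the bases of $f^{-1}(N)$, hence its underlying matroid is $f^{-1}(N)$. (One should still remark that $\rk_{f^{-1}(N)}([m]) = \rk_N(f([m])) = \rk_N([n]) = r$ by surjectivity of $f$, so that $\binom{[m]}{r}$ is the correct domain and there is at least one basis.)

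For the exchange property, fix $I,J\in\binom{[m]}{r}$ and $i\in I\setminus J$; we must produce $j\in J\setminus I$ with
\[ f^{-1}(\nu)(I)+f^{-1}(\nu)(J) \;\geq\; f^{-1}(\nu)\big((I\setminus i)\cup j\big) + f^{-1}(\nu)\big((J\setminus j)\cup i\big). \]
The plan is to split into cases according to whether $I$ and $J$ are bases of $f^{-1}(N)$. If either $I$ or $J$ is a non-basis, the left-hand side is $\infty$ and the inequality is vacuous: choose any $j\in J\setminus I$. So assume both $I$ and $J$ are bases of $f^{-1}(N)$. Then $f|_I$ and $f|_J$ are injective (a basis of $f^{-1}(N)$ cannot contain two elements with the same image, else its image would have rank $<r$), so $f(I)$ and $f(J)$ are bases of $N$ of size $r$, and $f(i)\in f(I)\setminus f(J)$ — here one must note $f(i)\notin f(J)$, which holds because $f|_{I\cup\{i\}}$ extends to... more carefully: since $i\notin J$ and $f|_J$ is a bijection onto $f(J)$, if $f(i)\in f(J)$ then $f(i)=f(j')$ for a unique $j'\in J$, and this case needs separate handling. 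So I would further subdivide: Case (a), $f(i)\notin f(J)$; Case (b), $f(i)=f(j')$ for some (unique) $j'\in J\setminus I$.

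In Case (a), apply the exchange axiom for $\nu$ to the bases $f(I),f(J)$ of $N$ and the element $f(i)\in f(I)\setminus f(J)$: there is $j\in J$ with $f(j)\in f(J)\setminus f(I)$ and $\nu(f(I))+\nu(f(J)) \geq \nu\big((f(I)\setminus f(i))\cup f(j)\big) + \nu\big((f(J)\setminus f(j))\cup f(i)\big)$. One checks $f((I\setminus i)\cup j) = (f(I)\setminus f(i))\cup f(j)$ and similarly for the other term (using injectivity of $f$ on $I,J$ and $f(j)\notin f(I)$, $f(i)\notin f(J)$), and that $(I\setminus i)\cup j$, $(J\setminus j)\cup i$ are again bases of $f^{-1}(N)$ — because their images are bases of $N$ of full size $r$ and $f$ is injective on them, so their rank is $r$. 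This gives the inequality with both sides matching the displayed one entry-by-entry. In Case (b), take $j = j'$; then $(I\setminus i)\cup j'$ has image $f(I)$ (so rank $r$, a basis, value $\nu(f(I))$) but $f$ is no longer injective on it — wait, $f((I\setminus i)\cup j') = (f(I)\setminus f(i))\cup f(j') = f(I)$ since $f(j')=f(i)$, yet the set $(I\setminus i)\cup j'$ has $r$ elements all with distinct images (as $f|_{I\setminus i}$ injective and $f(j')=f(i)\notin f(I\setminus i)$), so it is a basis of $f^{-1}(N)$ with value $\nu(f(I))$. Symmetrically $(J\setminus j')\cup i$ is a basis with value $\nu(f(J))$. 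Hence the right-hand side equals $\nu(f(I))+\nu(f(J)) = f^{-1}(\nu)(I)+f^{-1}(\nu)(J)$, so the inequality holds with equality.

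The main obstacle I anticipate is the bookkeeping in Case (b) and, more generally, making the identifications $f((I\setminus i)\cup j) = (f(I)\setminus f(i))\cup f(j)$ rigorous while tracking exactly when $f$ restricted to the various $r$-subsets is injective — i.e. ensuring that "image is a full-rank $r$-set" is equivalent to "the $r$-subset is a basis of $f^{-1}(N)$" via the rank formula $\rk_{f^{-1}(N)}(A)=\rk_N(f(A))$. Everything else is a direct transcription of the exchange axiom for $\nu$ across the map $f$; surjectivity of $f$ is used only to guarantee $\rk_{f^{-1}(N)}([m]) = r$ so that the construction is nonempty and lands in the right Dressian coordinate space.
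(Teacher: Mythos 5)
Your proposal is correct and follows essentially the same route as the paper's proof: reduce to the case where $I$ and $J$ are bases of $f^{-1}(N)$ (so $f$ is injective on each and $f(I),f(J)$ are bases of $N$), then split according to whether $f(i)\in f(I)\setminus f(J)$ — where the exchange axiom for $\nu$ is pulled back along $f$ — or $f(i)=f(j')$ for some $j'\in J\setminus I$, where the exchange holds with equality. Your write-up just makes explicit a few identifications (e.g. $f((I\setminus i)\cup j)=(f(I)\setminus f(i))\cup f(j)$ and the basis check for the exchanged sets) that the paper leaves implicit.
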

	\begin{proof}
		Since $f$ is surjective, $N$ and $f^{-1}(N)$ have the same rank $r$. Now if $B \in \binom{[m]}{r}$, then from $\rk_{f^{-1}(N)}(B) = \rk_N(f(B))$ we have that $B$ is a basis of $f^{-1}(N)$ if and only if $f(B)$ is a basis of $N$. This proves that the map $f^{-1}(\nu)$ is well defined. In particular, if $B$ is a basis of $f^{-1}(N)$, then $|B| = |f(B)|$, so the restriction of $f$ on $B$ is bijective.
		
		Now it suffices to show that for $I,J \in \binom{[m]}{r}$ and $i \in I \setminus J$ there exists $j \in J \setminus I$ such that
		\begin{equation}\label{eq: inequality}
			f^{-1}(\nu)(I) + f^{-1}(\nu)(J) \geq f^{-1}(\nu)(I \cup j \setminus i) + f^{-1}(\nu)(J \cup i \setminus j).
		\end{equation}
		If $I$ or $J$ is not a basis of $f^{-1}(N)$, then the left hand side of the above inequality is $\infty$ and we are done. Otherwise, assume that $I$ and $J$ are bases of $f^{-1}(N)$. This means that $f(I)$ and $f(J)$ are bases of $\nu$. Now, if $f(i) \in f(I) \setminus f(J)$, then there exists $f(j) \in$ $f(J)\setminus f(I)$ for some $j\in J\setminus I$ such that
		\begin{equation}\label{eq: morphism matroids inequality}
			\nu(f(I)) + \nu(f(J)) \geq \nu(f(I) \cup f(j) \setminus f(i)) + \nu(f(J) \cup f(i) \setminus f(j)).
		\end{equation}
		If $f(i) \in f(J)$, then there exists $j \in J\setminus I$ such that $f(i) = f(j)$. Therefore, we obtain equality in \eqref{eq: morphism matroids inequality}. Thus, for both cases, we get the inequality \eqref{eq: morphism matroids inequality}, which, using the definition of $f^{-1}(\nu)$ and \eqref{eq: inequality}, is exactly the definition of a valuated matroid.
	\end{proof}
	
	Let $\mu$ be a valuated matroid on the ground set $[m]$, and let $S \subseteq [m]$. Then, the \emph{restriction} of $\mu$ to $S$, denoted by $\mu_{|S}$, is defined as the deletion $\mu \setminus ([m] \setminus S)$ (see Proposition-Definition \ref{propdef: deletion of valuated matroid}).
	
	\begin{definition}
		Let $f:[m] \rightarrow [n]$ be a map of sets and let $\nu$ be a valuated matroid on the ground set $[n]$. The \defemph{induced} valuated matroid is defined by $f^{-1}(\nu) = f^{-1}(\nu_{|f([m])})$.
	\end{definition}
	
	\begin{definition}\label{def: morphism of matroid}
		Let $\mu$ and $\nu$ be valuated matroids on the ground set $[m]$ and $[n]$ respectively. A map of sets $f: [m] \cup \{o\} \rightarrow [n] \cup \{o\}$ with $f(o) = o$ is a \defemph{morphism of valuated matroids}, denoted $f: \mu \rightarrow \nu$, if $f^{-1}(\nu_o) \twoheadleftarrow \mu_o$. Further, $f$ is \emph{realizable} if the quotient $f^{-1}(\nu_o) \twoheadleftarrow \mu_o$ is realizable.
		%This last condition is not necessarily requiring that \nu should be realizable. While this is always the case for quotients (which are particular surjective morphisms of matroids), if f is not surjective, this is requiring just that the restriction of \nu on the image of f is realizable.
	\end{definition}
	
	In our context, it suffices to restrict to projections. Let $\mu$ and $\nu$ be two valuated matroids on the common ground set $[n]$, and let $S \subseteq [n]$.  Define the projection map $\pr_S: [n] \cup \{o\} \rightarrow [n] \cup \{o\}$ by
	\[ \pr_S(x) = \begin{cases}
		x & \text{if } x \notin S, \\
		o & \text{if } x \in S.
	\end{cases} \]
	Using this definition, we prove the final parts of Theorem \ref{thm:main_valuated_ld_flag_correspondence} and Theorem \ref{thm:main_valuated_ld_flag_correspondence_realizable}, (b) $\Leftrightarrow$ (d).
	\begin{proposition}\label{prop: b equi d}
		Let $\mu$ and $\nu$ be two valuated matroids on the common ground set $[n]$, and let $S \subseteq [n]$. Then, $\pr_S: \nu \rightarrow \mu$ is a morphism of valuated matroids if and only if $\mu_S \twoheadleftarrow \nu$. Further, $\pr_S: \nu \rightarrow \mu$ is realizable if and only if $\mu_S \twoheadleftarrow \nu$ is realizable.
	\end{proposition}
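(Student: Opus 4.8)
The plan is to unwind both sides of the claimed equivalence to a common third condition, namely that the map on ground sets $\pr_S : [n] \cup \{o\} \to [n] \cup \{o\}$ satisfies $\pr_S^{-1}(\mu_o) \twoheadleftarrow \nu_o$, and to identify $\pr_S^{-1}(\mu_o)$ explicitly with $(\mu_S)_o$ (possibly after a harmless identification of loops). By Definition \ref{def: morphism of matroid}, $\pr_S : \nu \to \mu$ is a morphism of valuated matroids precisely when $\pr_S^{-1}(\mu_o) \twoheadleftarrow \nu_o$, and it is realizable precisely when this quotient is realizable; so everything reduces to comparing this quotient with $\mu_S \twoheadleftarrow \nu$.

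First I would compute $\pr_S^{-1}(\mu_o)$. By the definition of the induced valuated matroid, $\pr_S^{-1}(\mu_o) = \pr_S^{-1}((\mu_o)_{|\pr_S([n] \cup \{o\})})$. The image $\pr_S([n] \cup \{o\})$ is $([n] \setminus S) \cup \{o\}$, so the restriction of $\mu_o$ to this image is exactly $(\mu \setminus S)_o$ (deleting the elements of $S$, which were not loops, from $\mu$, while $o$ remains a loop). Pulling this back along $\pr_S$ — which is the identity on $[n] \setminus S$ and sends all of $S$ and $o$ to $o$ — reintroduces the elements of $S$ as loops, together with the original loop $o$. Thus $\pr_S^{-1}(\mu_o)$ is, up to the identification of the $|S|+1$ loops, the valuated matroid $(\mu_S)_o = (\mu \setminus S) \oplus U_{0,|S|} \oplus U_{0,1}$, using the description of $\mu_S$ given just after \eqref{eq: def of mu_S}. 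On Plücker coordinates: a basis of $\pr_S^{-1}(\mu_o)$ is an $r$-subset $B$ of $[n]$ with $\pr_S(B)$ a basis of $\mu$ (and $o \notin B$), which forces $B \cap S = \emptyset$ and $B$ a basis of $\mu \setminus S$; then $\pr_S^{-1}(\mu_o)(B) = \mu_o(\pr_S(B)) = \mu(B) = \mu_S(B)$. So $\pr_S^{-1}(\mu_o)$ and $(\mu_S)_o$ agree as valuated matroids after identifying their loops.

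Next, I would observe that adding a common loop $o$ to both sides of a valuated matroid quotient does not affect whether it holds: $(\mu_S)_o \twoheadleftarrow \nu_o$ if and only if $\mu_S \twoheadleftarrow \nu$. This is a routine check from the quotient inequality in Definition 2.x (the valuated matroid quotient definition), since any $I, J$ containing $o$ make the relevant terms $\infty$, and the ranks match up. Combining this with the identification of the previous paragraph gives
\[
\pr_S : \nu \to \mu \text{ is a morphism} \iff \pr_S^{-1}(\mu_o) \twoheadleftarrow \nu_o \iff (\mu_S)_o \twoheadleftarrow \nu_o \iff \mu_S \twoheadleftarrow \nu,
\]
which is the first assertion. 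For the realizability statement, the same chain of identifications is compatible with realizations: a realization of $\nu_o$ is a realization of $\nu$ with an extra zero column, a realization of $\pr_S^{-1}(\mu_o)$ is obtained from a realization $A$ of $\mu$ by zeroing out the columns indexed by $S$ and appending a zero column (which is exactly a realization of $\mu_S$ with an extra loop), and the quotient $\pr_S^{-1}(\mu_o) \twoheadleftarrow \nu_o$ is realized by an inclusion of row spans precisely when $\mu_S \twoheadleftarrow \nu$ is. So realizability transfers in both directions.

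The main obstacle I anticipate is purely bookkeeping rather than conceptual: one must be careful that the definition of $f^{-1}(\nu)$ via $f^{-1}(\nu_{|f([m])})$ interacts correctly with the loop $o$ and with the non-surjectivity of $\pr_S$ (its image misses $S$), and that the implicit identification of the $|S|$ loops created by $\pr_S^{-1}$ with the loops already built into $\mu_S$ is legitimate — i.e., that valuated matroids are only considered up to relabeling loops among themselves, which is consistent with the convention of working up to equivalence. Making this identification precise (e.g., fixing a bijection of loop sets) is the one place where care is needed; once that is in place, the rest is the elementary verification that quotients and realizability are preserved under these identifications.
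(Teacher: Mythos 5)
Your proposal is correct and follows essentially the same route as the paper: identify $\pr_S^{-1}(\mu_o)$ with $(\mu_o)_S=(\mu_S)_o$, note that adding the loop $o$ does not affect the quotient relation (it is a condition on basis valuations only), and transfer realizability via appending zero columns. One minor imprecision: in your Pl\"ucker-coordinate check, if $\rk(\mu\setminus S)<\rk(\mu)$ the bases of $\pr_S^{-1}(\mu_o)$ have size $\rk(\mu\setminus S)$ and carry the values of $\mu\setminus S$ (i.e.\ $\mu(B\cup I)$ for the chosen $I\subseteq S$), not literally ``$r$-subsets $B$ with $\pr_S(B)$ a basis of $\mu$ and value $\mu(B)$''; your preceding restriction-then-pullback computation already handles this correctly, so the slip is harmless.
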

	\begin{proof}
		We define $\mu_S$ as in \eqref{eq: def of mu_S}. By construction, we have $\pr_S^{-1}(\mu_o) = {(\mu_o)}_S$. Thus $\pr_S: \nu \rightarrow \mu$ is a morphism of valuated matroids if and only if $(\mu_o)_S \twoheadleftarrow \nu_o$. Since being a quotient is a condition on the valuations of bases, and $\mu$ and $\mu_o$ (and $\nu$ and $\nu_o$ respectively) have the same bases with the same valuation, $(\mu_o)_S \twoheadleftarrow \nu_o$ if and only if $\mu_S \twoheadleftarrow \nu$. The realizability statement follows from Definition \ref{def: morphism of matroid}.
	\end{proof}
	
	%Note that any linear map realizing the morphism $\pr_S: \nu \rightarrow \mu$, after a change of coordinates, is always a projection. This follows directly from the definition of a realizable morphism of matroids (Definition \ref{def: morphism of matroid}). This fact, together with the above proposition, prove $(b) \Leftrightarrow (d)$ of Theorem \ref{thm:main_valuated_ld_flag_correspondence_realizable}.
	
	\section{The poset of linear degenerate flag varieties}\label{sec: applications}
	
	Linear degenerate flag varieties can be arranged in a poset in a natural way as follows. Fix $k,n \in \mathbb{N}$ and a sequence $\mathbf{r} = ( r_1,\dots,r_k )$ of nonnegative integers such that $r_1 \leq \dots \leq r_k \leq n$. Now, we consider the following set of linear degenerate flag varieties:
	\[ \mathcal{L} = \bigg\{ \LFl(\mathbf{r},\mathbf{S};n) : \mathbf{S} = (S_1,\dots,S_{k-1}) \text{ with } S_i \subseteq [n] \bigg\}. \]
	We can define an order relation $\preceq$ on $\mathcal{L}$ given by $\LFl(\mathbf{r},\mathbf{S};n) \preceq \LFl(\mathbf{r},\mathbf{S}';n)$ if and only if $S_i \subseteq S_i'$ for every $i \in \{ 1,\dots,k \}$, where $\mathbf{S}' = ( S_1',\dots,S_{k-1}')$. Note that $(\mathcal{L},\preceq)$ is a finite lattice isomorphic to the product of lattices $\prod_{i=1}^k 2^{[n]}$, where $2^{[n]}$ is the power set of $[n]$ ordered by set inclusion.
	
	The maximum of $\mathcal{L}$ is the linear degenerate flag variety with $S_i = [n]$ for every $i$, in other words at each step of the flag we are projecting all the coordinates, so the linear spaces of the flag do not have any relation to each other, and the variety we obtain is just a product of Grassmannians:
	\[ \LFl(\mathbf{r},([n],\dots,[n]);n) = G(r_1;n) \times \dots \times G(r_k;n). \]
	On the other hand, the minimum of $\mathcal{L}$ is the linear degenerate flag variety with $S_i = \emptyset$ for every $i$.  Here, we are not degenerating the flag variety as at each step the projection is an identity map, thus all linear degenerate flags are flags:
	\[ \LFl(\mathbf{r},(\emptyset,\dots,\emptyset);n) = \Fl(\mathbf{r};n). \]
	
	Analogously, we can arrange linear degenerate tropical flag varieties, and linear degenerate flag Dressians in lattices isomorphic to $\prod_{i=1}^k 2^{[n]}$. %in a similar way as we did above.
	
	\paragraph{Linear degenerate tropical flag varieties with $n=4$}
	Now, we want to take a closer look at the lattice of linear degenerate tropical flag varieties for the case $n=4$.
	
	We used Macaulay2 \cite{M2} to compute the linear degenerate Pl\"ucker relations, using code available in the Github repository \cite{borzi2023githubrepo}; and used the package \verb*|Tropical.m2| \cite{tropicalpackage} to compute the respective linear degenerate tropical flag varieties. We did some additional computations in \texttt{gfan} \cite{gfan} and Oscar \cite{Oscar}.
	
	For the rest of this section, we will consider varieties of \emph{complete flags} in $\mathbb{C}^4$. More precisely, we fix $\mathbf{r} = (1,2,3)$, and omit $\mathbf{r}$ in our notation. For instance, we denote $\Fl(4)\coloneqq\Fl(\mathbf{r};4)$ and $\LFl \big((\{1\},\emptyset);4 \big)\coloneqq\LFl \big( \mathbf{r},(\{1\},\emptyset);4 \big)$. To simplify the notation, we will use $\LFl(S_1,S_2;4)$ in place of $\LFl((S_1,S_2);4)$. We consider tropicalization with respect to the trivial valuation.

	A point $p$ in the tropicalization of (linear degenerate) flag varieties corresponds to an arrangement of a point $v_p$, a general tropical line $L_p$ and a general tropical plane $P_p$ in ${\mathbb{P}(\mathbb{T}^4)}$. As in previous examples, $L_p$ has two vertices connected by a bounded edge, and four unbounded edges in the coordinate directions, two adjacent to each vertex. A general tropical plane in ${\mathbb{P}(\mathbb{T}^4)}$ consists of a vertex, four adjacent rays in all four coordinate directions, and all two-dimensional cones spanned by pairs of these rays (see Figure \ref{fig: tropical flag}).
	
	\begin{example}\label{ex: tropfl4}
		We begin by analyzing the tropicalization $\trop(\Fl(4))$ of the complete flag variety  $\Fl(4)$. By \cite[Theorem 5.2.1]{brandt2021tropicalflag}, $\trop(\Fl(4)) = \FlDr(4)$, i.e.,  the Pl\"ucker relations form a tropical basis and all tropical flags of length $4$ are realizable. 
		
		The tropical variety $\trop(\Fl(4))$ is a six-dimensional simplicial fan with lineality dimension three in ${\mathbb{P}(\mathbb{T}^4)}\times {\mathbb{P}(\mathbb{T}^6)} \times {\mathbb{P}(\mathbb{T}^4)}$. It has f-vector $(1,20,79,78)$ after quotienting by the lineality space. This variety was computed in \cite[Theorem 4]{bossinger2017toricdegenerations} and a sketch of it was given in \cite[Figure 9]{jell2022moduli}, which we report here in Figure \ref{fig: tropfl4}.
		
		A point $p$ in $\trop(\Fl(4))$ corresponds to a complete tropical flag, i.e. $v_p \subseteq L_p \subseteq P_p \subseteq {\mathbb{P}(\mathbb{T}^4)}$. An example of a complete flag was given in Figure \ref{fig: tropical flag}(a).
		
		After quotienting by the lineality space, $\trop(\Fl(4))$ can be seen as a ``tropical line bundle" over the Petersen graph (as explained in \cite[Paragraph 3.3.3]{jell2022moduli}). There are 15 combinatorially distinct ways to arrange a generic tropical line inside a fixed generic tropical plane in ${\mathbb{P}(\mathbb{T}^4)}$, corresponding to the edges of the Petersen graph in Figure \ref{fig: tropfl4}. In fact, this is dual to a fixed point inside a generic tropical line in ${\mathbb{P}(\mathbb{T}^4)}$, see Example \ref{ex: Fl 1,2;4}.
		
		The three blue edges in Figure \ref{fig: tropfl4}(a), connecting $(ab)$ to $(cd)$ for distinct $a,b,c,d \in [4]$ correspond to the case when one vertex of the tropical line $L_p$ lies on the ray of the tropical plane $P_p$ in direction $e_a+e_b$, and the other vertex of $L_p$ lies on the ray of $P_p$ in direction $e_c+e_d$. This situation is depicted in Figure \ref{fig: tropical flag}(a), where the dashed green rays are those in direction $e_a+e_b$. Further, the twelve black edges of the Petersen graph in Figure \ref{fig: tropfl4}(a)  connecting $(a)$ to $(ab)$ for distinct $a,b\in[4]$, correspond to $p$ where one vertex of $L_p$ lies on the ray of $P_p$ in direction $e_a$ and the other vertex of $L_p$ lies in the cone of $P_p$ spanned by $e_a$ and $e_b$.
		Finally, the the remaining degrees of freedom for the position of the point $p$ in the line bundle determine the position of the point $v_p$ on $L_p$.
		
		Note that both of the cases discussed above correspond to maximal cells: in the first case, both vertices of $L_p$ vary in the one-dimensional space in direction $e_a+e_b = -(e_c+e_d)$, whereas in the second case, the vertex of $L_p$ inside the span of $e_a$ and $e_b$ can be freely chosen, but then fixes the choice of the other vertex on $e_a$ by the balancing condition. The polyhedral structure of the tropical line bundle differs accordingly, see Figure \ref{fig: tropfl4}$(b)$ for blue edges and Figure \ref{fig: tropfl4}$(b')$ for black edges respectively.
	\end{example}
	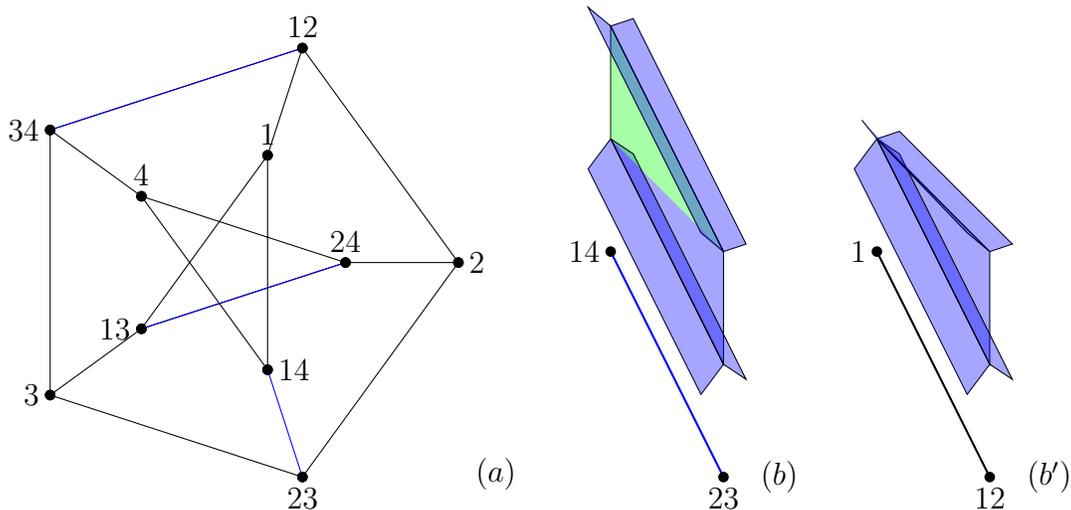
\begin{figure}[ht]
		\centering\begin{tabular}{c c c}
			\begin{tikzpicture}
				\foreach \i in {1,...,5}
				\fill (\i*360/5:1.5) coordinate (5\i) circle(2 pt);
				
				\foreach \i in {1,...,5}
				\fill (\i*360/5:3) coordinate (5\i) circle(2 pt);
				\draw  (360/5:3) --  (2*360/5:3) --  (3*360/5:3) --  (4*360/5:3) --  (5*360/5:3)-- (360/5:3);
				\node[above] at (360/5:1.5) {$1$};
				\node[above] at (2*360/5:1.5) {$4$};
				\node[above, left] at (3*360/5:1.5) {$13$};
				\node[above, right] at (4*360/5:1.5) {$14$};
				\node[above] at (5*360/5:1.5) {$24$};
				\node[above] at (360/5:3) {$12$};
				\node[above, left] at (2*360/5:3) {$34$};
				\node[above, left] at (3*360/5:3) {$3$};
				\node[below] at (4*360/5:3) {$23$};
				\node[above,right] at (5*360/5:3) {$2$};
				\draw  (360/5:3) -- (360/5:1.5);
				\draw  (2*360/5:3) -- (2*360/5:1.5);
				\draw  (3*360/5:3) -- (3*360/5:1.5);
				\draw[blue]  (4*360/5:3) -- (4*360/5:1.5);
				\draw  (5*360/5:3) -- (5*360/5:1.5);
				\draw  (3*360/5:1.5) -- (5*360/5:1.5) -- (2*360/5:1.5)-- (4*360/5:1.5)-- (1*360/5:1.5)-- (3*360/5:1.5);
				\draw[blue]  (3*360/5:1.5) -- (5*360/5:1.5);
				\draw[blue]  (1*360/5:3) -- (2*360/5:3);
				\foreach \i in {1,...,5}
				\fill (\i*360/5:1.5) coordinate (5\i) circle(2 pt);
				
				\foreach \i in {1,...,5}
				\fill (\i*360/5:3) coordinate (5\i) circle(2 pt);
				{\node[below] at (3.5,-2.45) {$(a)$};}
			\end{tikzpicture}	& \begin{tikzpicture}
				\draw[blue, thick] (0, 2) -- (1.5, -1);
				\fill (1.5, -1) circle (2pt);
				\fill (0,2) circle (2pt);
				\node[below] at (1.5,-1) {$23$};
				\node[left] at (0,2) {$14$};
				
				\draw (0, 3.5) -- (1.5, 0.5) --  (1.5,2) -- (0,5) --(0, 3.5) ;
				\fill[opacity = 0.35, blue](0, 3.5) -- (1.5, 0.5) --  (1.5,2);
				
				\fill[opacity = 0.35, blue](0, 3.5) -- (1.5, 0.5) --(1.8, 0.3) -- (0.3,3.3) -- (0, 3.5) ;
				\fill[opacity = 0.35, green](1.5,2) -- (0,5) --(0, 3.5);
				\draw (0, 3.5) -- (1.5, 0.5) -- (1.2, 0.1) -- (-0.3,3.1) -- (0, 3.5) ;
				\fill[opacity = 0.35, blue](0, 3.5) -- (1.5, 0.5) -- (1.2, 0.1) -- (-0.3,3.1) -- (0, 3.5) ;
				\draw (0, 3.5) -- (1.5, 0.5) --(1.8, 0.3) -- (0.3,3.3) -- (0, 3.5) ;
				\draw (1.5,2) -- (0,5) -- (-0.3,5.25) -- (1.2,2.25) -- (1.5,2);
				\fill[opacity = 0.35, blue] (1.5,2) -- (0,5) -- (-0.3,5.25) -- (1.2,2.25) -- (1.5,2); 
				\draw (1.5,2) -- (0,5) -- (0.3,5.1) -- (1.8,2.1) -- (1.5,2);
				\fill[opacity = 0.35, blue]  (1.5,2) -- (0,5) -- (0.3,5.1) -- (1.8,2.1) -- (1.5,2); 
				{\node [right] at (1.85,-1) {$(b)$};}\end{tikzpicture}  & \begin{tikzpicture}
				\draw[thick] (0, 2) -- (1.5, -1);
				\fill (1.5, -1) circle (2pt);
				\fill (0,2) circle (2pt);
				\node[below] at (1.5,-1) {$12$};
				\node[left] at (0,2) {$1$};
				
				\draw (0, 3.5) -- (1.5, 0.5) --  (1.5,2)  --(0, 3.5) ;
				
				\fill[opacity = 0.35, blue](0, 3.5) -- (1.5, 0.5) --  (1.5,2);
				
				\fill[opacity = 0.35, blue](0, 3.5) -- (1.5, 0.5) --(1.8, 0.3) -- (0.3,3.3) -- (0, 3.5) ;
				
				\draw (0, 3.5) -- (1.5, 0.5) -- (1.2, 0.1) -- (-0.3,3.1) -- (0, 3.5) ;
				
				\fill[opacity = 0.35, blue](0, 3.5) -- (1.5, 0.5) -- (1.2, 0.1) -- (-0.3,3.1) -- (0, 3.5) ;

				\draw (0, 3.5) -- (1.5, 0.5) --(1.8, 0.3) -- (0.3,3.3) -- (0, 3.5) ;

				\draw  (-0.2, 3.75) -- (0, 3.5) -- (1.2,2.25) -- (1.5,2);
				\fill[opacity = 0.35, blue]  (-0.2, 3.75) -- (0, 3.5) -- (1.2,2.25) -- (1.5,2); 
				\draw (1.5,2) --  (0,3.5) -- (0.3,3.6)-- (1.8,2.1) -- (1.5,2);
				\fill[opacity = 0.35, blue]  (1.5,2) -- (0,3.5) -- (0.3,3.6) -- (1.8,2.1) -- (1.5,2); 
				{\node [right] at (1.85,-1) {$(b')$};}
			\end{tikzpicture} 
		\end{tabular}
		\caption{ The tropical flag variety $\trop(\Fl_4)$ after quotienting by its lineality space, interpreted as a ``tropical line bundle" over the Petersen graph, see also \cite[Paragraph 3.3.3 and Figure 9]{jell2022moduli}.}
		\label{fig: tropfl4}
	\end{figure}
	\begin{example}\label{ex: lindeg 1 fl4}
		Now, we want to consider the linear degenerate tropical flag varieties $\trop(\LFl(\{1\},\emptyset;4))$ and $\trop(\LFl(\emptyset,\{1\};4))$. The tropical variety $\trop(\LFl(\{1\},\emptyset;4))$ parametrizes tropical flags such that $\pr_{1}(v_p) \subseteq L_p \subseteq P_p$. This is an extension of Example \ref{ex: linear degenerate section 3} by additionally requiring containment in a plane. An example of a point in $\trop(\LFl(\{1\},\emptyset;4))$ is depicted in Figure \ref{fig: tropical flag} (b). 
		
		The linear degenerate tropical flag variety $\trop(\LFl(\{1\},\emptyset;4))$is a six-dimensional simplicial fan in ${\mathbb{P}(\mathbb{T}^4)}\times{\mathbb{P}(\mathbb{T}^6)}\times {\mathbb{P}(\mathbb{T}^4)}$ with lineality dimension four. It has a familiar combinatorial structure: after quotienting by the lineality space, we obtain a fan over the Petersen graph. Again, the fifteen cones over the Petersen graph correspond to the fifteen combinatorially distinct ways of arranging a general line in a general plane (see Example \ref{ex: tropfl4}). This time, the position of the point $v_p$ imposes no additional combinatorial constraints, as by $\pr_1(v_p) \subseteq L_p$, $v_p$ is contained in the span of the $e_1$-ray of $L_p$.

		Comparing to Example \ref{ex: tropfl4}, the situation is much easier: the fan is a ``usual line bundle" (as opposed to a ``tropical line bundle" in Example \ref{ex: tropfl4}) over the Petersen graph.  In Figure \ref{fig: tropfl4_lindeg1}, we depict this degeneration.
		
		We can understand $\trop(\LFl(\emptyset,\{1\};4))$ similarly. The two different types of linear degenerate  flags described above are dual to each other, in fact $\LFl(\{1\},\emptyset;4) \simeq \LFl(\emptyset,\{1\};4)$, and   $\trop(\LFl(\{1\},\emptyset;4))$ and $\trop(\LFl(\emptyset,\{1\};4))$ have the same polyhedral structure with different lineality spaces.
	\end{example}
	
	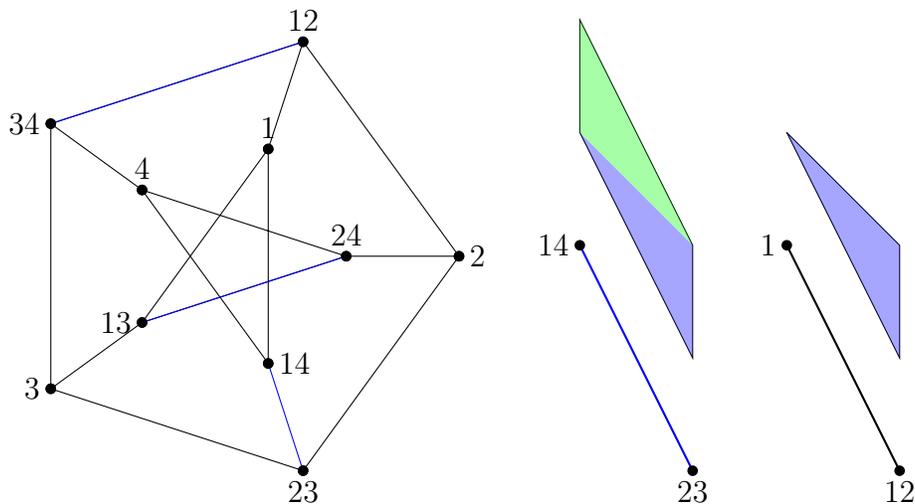
\begin{figure}[ht]
		\centering\begin{tabular}{c c c}
			\begin{tikzpicture}
				\foreach \i in {1,...,5}
				\fill (\i*360/5:1.5) coordinate (5\i) circle(2 pt);
				
				\foreach \i in {1,...,5}
				\fill (\i*360/5:3) coordinate (5\i) circle(2 pt);
				\draw  (360/5:3) --  (2*360/5:3) --  (3*360/5:3) --  (4*360/5:3) --  (5*360/5:3)-- (360/5:3);
				\node[above] at (360/5:1.5) {$1$};
				\node[above] at (2*360/5:1.5) {$4$};
				\node[above, left] at (3*360/5:1.5) {$13$};
				\node[above, right] at (4*360/5:1.5) {$14$};
				\node[above] at (5*360/5:1.5) {$24$};
				\node[above] at (360/5:3) {$12$};
				\node[above, left] at (2*360/5:3) {$34$};
				\node[above, left] at (3*360/5:3) {$3$};
				\node[below] at (4*360/5:3) {$23$};
				\node[above,right] at (5*360/5:3) {$2$};
				\draw  (360/5:3) -- (360/5:1.5);
				\draw  (2*360/5:3) -- (2*360/5:1.5);
				\draw  (3*360/5:3) -- (3*360/5:1.5);
				\draw[blue]  (4*360/5:3) -- (4*360/5:1.5);
				\draw  (5*360/5:3) -- (5*360/5:1.5);
				\draw  (3*360/5:1.5) -- (5*360/5:1.5) -- (2*360/5:1.5)-- (4*360/5:1.5)-- (1*360/5:1.5)-- (3*360/5:1.5);
				\draw[blue]  (3*360/5:1.5) -- (5*360/5:1.5);
				\draw[blue]  (1*360/5:3) -- (2*360/5:3);
				\foreach \i in {1,...,5}
				\fill (\i*360/5:1.5) coordinate (5\i) circle(2 pt);
				
				\foreach \i in {1,...,5}
				\fill (\i*360/5:3) coordinate (5\i) circle(2 pt);
			\end{tikzpicture}	& \begin{tikzpicture}
				\draw[blue, thick] (0, 2) -- (1.5, -1);
				\fill (1.5, -1) circle (2pt);
				\fill (0,2) circle (2pt);
				\node[below] at (1.5,-1) {$23$};
				\node[left] at (0,2) {$14$};
				
				\draw (0, 3.5) -- (1.5, 0.5) --  (1.5,2) -- (0,5) --(0, 3.5) ;
				
				\fill[opacity = 0.35, blue](0, 3.5) -- (1.5, 0.5) --  (1.5,2);
				
				\fill[opacity = 0.35, green](1.5,2) -- (0,5) --(0, 3.5);	\end{tikzpicture}  & \begin{tikzpicture}
				\draw[thick] (0, 2) -- (1.5, -1);
				\fill (1.5, -1) circle (2pt);
				\fill (0,2) circle (2pt);
				\node[below] at (1.5,-1) {$12$};
				\node[left] at (0,2) {$1$};
				
				\draw (0, 3.5) -- (1.5, 0.5) --  (1.5,2)  --(0, 3.5) ;
				
				\fill[opacity = 0.35, blue](0, 3.5) -- (1.5, 0.5) --  (1.5,2);

			\end{tikzpicture} 
		\end{tabular}
		\caption{ The linear degenerate tropical flag variety $\trop(\LFl(\emptyset,\{1\});4)$ can be interpreted as a ``line bundle" over the Petersen graph.}
		\label{fig: tropfl4_lindeg1}
	\end{figure}
	\begin{example}
		To finish, we consider the linear degenerate tropical flag variety $\trop(\LFl(\{1\},\{1\};4))$. It is a six-dimensional simplicial fan with lineality dimension five in ${\mathbb{P}(\mathbb{T}^4)}\times{\mathbb{P}(\mathbb{T}^6)}\times {\mathbb{P}(\mathbb{T}^4)}$ and f-vector $(1,3)$ after quotienting out the lineality space, i.e. a six-dimensional ``line bundle" over a tropical line. We depict it in Figure \ref{fig: tropfl4_lindeg2}. 
		Here, maximal cells can be interpreted as follows. As before, the first linear degeneration condition implies that $v_p$ lies on the span of the ray of $L_p$ in direction $e_1$. The second degeneration implies that one vertex of the tropical line lies on the linear span of $e_1$ (and thus that the other vertex lies on one of the 3 coordinate half-planes spanned by $e_1$). Note that this does not imply that $L_p$ is contained in $P_p$. Thus, there are three maximal cells.
	\end{example}
	
	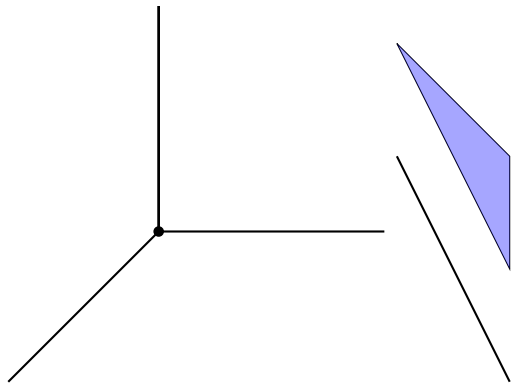
\begin{figure}[ht]
		\centering
		\begin{tabular}{c c}
			\begin{tikzpicture}[scale = 2]
				\draw[thick] (-1,-1) -- (0,0) -- (0,1.5) -- (0,0) -- (1.5,0);
				\fill (0,0) circle (1pt);
			\end{tikzpicture} 
			\begin{tikzpicture}
				\draw[thick] (0, 2) -- (1.5, -1);
				\draw (0, 3.5) -- (1.5, 0.5) --  (1.5,2)  --(0, 3.5) ;
				
				\fill[opacity = 0.35, blue](0, 3.5) -- (1.5, 0.5) --  (1.5,2);

			\end{tikzpicture} 
		\end{tabular}
		\caption{ The linear degenerate tropical flag variety $\trop(\LFl(\{1\},\{1\});4)$ can be interpreted as a ``line bundle" over the tropical line.}
		\label{fig: tropfl4_lindeg2}
	\end{figure}
	
	One possible application of the poset of linear degenerate tropical flag varieties would be to reduce the problem of computing a tropical flag variety to the problem of computing (a product of) tropical Grassmannians. Recall that the tropical flag variety is the minimum of the poset $\mathcal{L}$ of linear degenerate tropical flag varieties, while its maximum is a product of tropical Grassmannians. Therefore, one could try to start from the top of $\mathcal{L}$, and, by descending the poset $\mathcal{L}$ step by step, reconstruct the structure of the tropical flag variety. In order to do that, it would be enough to understand what happens at the \emph{covers} of the poset $\mathcal{L}$, that is, to (fully or partially) reconstruct the structure of $\trop(\LFl(\mathbf{r},\mathbf{S};n))$ from another linear degenerate tropical flag variety $\trop(\LFl(\mathbf{r},\mathbf{S}';n))$ that covers it, i.e.  $\mathbf{S}$ is obtained from $\mathbf{S}'$ by adding one element in one of the sets $S_i$.
	
	\begin{question}
		Can we reconstruct the structure of $\trop(\LFl(\mathbf{r},\mathbf{S};n))$ from a cover?
	\end{question}
	
	The examples we have seen above already provide some insight into what the answer is for complete flags with $n=4$. A common behaviour that we observe is that the lineality space increases in dimension after each linear degeneration. In fact, this can be shown in more generality. The next result shows that for an ideal $I \subseteq k[x_0,\dots,x_n]$, the lineality space of a tropical variety $\trop(V(I))$ contains the homogeneity space of $I$, which is the linear subspace of vectors $v \in \mathbb{R}^{n+1}$ such that $I$ is homogeneous with respect to the grading $\deg(x_i) = v_i$.
	
	\begin{lemma}\label{lem:homogeneous lineality space}
		Let $I \subseteq k[x_0,\dots,x_n]$ be an ideal, where $k$ is a field with the trivial valuation. Let $v = (v_0,\dots,v_n) \in \mathbb{R}^{n+1}$. If $I$ is homogeneous with respect to the grading $\deg(x_i) = v_i$ then $v$ is in the lineality space of $\trop(V(I))$.
	\end{lemma}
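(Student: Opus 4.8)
\noindent
The plan is to describe $\trop(V(I))$ by initial ideals and to show directly that adding a multiple of $v$ to a weight vector does not change the initial ideal. Recall that over a trivially valued field one has, for $w \in \mathbb{R}^{n+1}$, that $w \in \trop(V(I))$ if and only if $\iin_w(I) = \langle \iin_w(f) : f \in I \rangle$ contains no monomial, where $\iin_w(f)$ is the sum of the terms of $f$ of minimal $w$-weight (see \cite[Chapter 2]{maclagan2015book}; we follow the $\min$-convention). Since the lineality space of the tropical variety $\trop(V(I))$ is the largest linear subspace $\Lambda$ with $\trop(V(I)) + \Lambda = \trop(V(I))$, it is enough to prove that $\iin_{w + tv}(I) = \iin_w(I)$ for all $w \in \mathbb{R}^{n+1}$ and all $t \in \mathbb{R}$: then $w$ and $w + tv$ lie in $\trop(V(I))$ simultaneously, so $\trop(V(I))$ is invariant under translation by $\mathbb{R}v$, and in particular $v$ lies in its lineality space.

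\noindent
First I would treat a single $v$-homogeneous polynomial $g$, say of $v$-degree $d$ for the grading $\deg(x_i) = v_i$. Every monomial $x^u$ of $g$ satisfies $\sum_i u_i v_i = d$, hence $\sum_i u_i (w_i + t v_i) = \sum_i u_i w_i + td$; adding the constant $td$ to the $w$-weight of each term of $g$ leaves the set of minimal-weight terms unchanged, so $\iin_{w + tv}(g) = \iin_w(g)$. Next I would reduce the general case to this one. As $I$ is $v$-homogeneous it splits as a direct sum $I = \bigoplus_d I_d$ over $v$-degrees, so every $v$-homogeneous component $f_{[d]}$ of an element $f \in I$ again lies in $I$. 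Writing $f = \sum_d f_{[d]}$ and letting $m$ be the minimal $w$-weight of a monomial of $f$, the minimal-weight terms of $f$ split across the (disjointly supported) $f_{[d]}$: for each $d$ the corresponding contribution is $\iin_w(f_{[d]})$ if the minimal $w$-weight over $\supp(f_{[d]})$ equals $m$, and $0$ if it is strictly larger (it cannot be smaller, since $\supp(f_{[d]}) \subseteq \supp(f)$). Therefore $\iin_w(f)$ is a sum of elements $\iin_w(g)$ with $g$ a $v$-homogeneous member of $I$, which yields $\iin_w(I) = \langle \iin_w(g) : g \in I \text{ is } v\text{-homogeneous} \rangle$. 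Combining with the homogeneous case gives $\iin_{w + tv}(I) = \iin_w(I)$, as required.

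\noindent
The steps involving the fact that an affine shift of all weights does not move the minimizing terms, and the disjoint-support bookkeeping, are routine; the one point I would spell out carefully is the identity $\iin_w(I) = \langle \iin_w(g) : g \in I \text{ is } v\text{-homogeneous} \rangle$, that is, that the initial ideal of a $v$-homogeneous ideal is generated by the initial forms of its $v$-homogeneous elements. A more geometric argument via the one-parameter torus action attached to the grading works cleanly only when $v$ is rational, so the initial-ideal argument is preferable in this generality.
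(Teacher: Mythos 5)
Your proof is correct and follows essentially the same route as the paper: both argue that adding (a multiple of) $v$ to a weight vector $w$ leaves the initial ideal unchanged, i.e. $\iin_{w+v}(I)=\iin_w(I)$, so $\trop(V(I))$ is invariant under translation by $v$ and hence $v$ lies in its lineality space. You are in fact slightly more careful than the paper on one point: the paper states $\iin_v(f)=f$ for every $f\in I$, which literally holds only for $v$-homogeneous $f$, while you handle this via the standard reduction that $\iin_w(I)$ is generated by the initial forms of the $v$-homogeneous elements of $I$.
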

	\begin{proof}
		If $I$ is homogeneous with respect to the grading $\deg(x_i) = v_i$, then $\iin_v(f) = f$ for every $f \in I$. This implies that $\iin_{v+w}(f) = \iin_w(f)$, as for every monomial $m$ of $f$, we are adding the same weight to the scalar product of the exponent vector of $m$ and $w$. In particular $\iin_{w+v}(I) = \iin_{w}(I)$ for every $w \in \mathbb{R}^{{n+1}}$. Hence $w \in \trop(V(I))$ if and only if $w + v \in \trop(V(I))$, that is, $v$ is in the lineality space of $\trop(V(I))$.
	\end{proof}

	%\begin{proof}
	%\emph{Necessity}. We first pass to a field extension $k \subseteq K$ where $K$ is an algebraically closed field with nontrivial valuation (for instance the field of Puiseux series $K = k \{\!\{ t \}\!\}$). Suppose $v \in \mathbb{R}^n$ is in the lineality space of $\trop(V(I))$. Consider the action of $K^* = K \setminus \{0\}$ on $X = V(I)$ defined by
	%\[ t \cdot (p_1,\dots,p_n) = (t^{v_1} p_1, \dots, t^{v_n} p_n), \quad \text{for every } t \in K^* \text{ and } p = (p_1,\dots,p_n) \in X. \]
	%Since the tropicalization is obtained from the closure of the componentwise valuation, we obtain that $\trop(K^* \cdot X) = v + \trop(X) = \trop(X)$, where in the last equality we used the fact that $v$ is in the lineality space of $\trop(X)$. Thus $K^* \cdot X$ and $X$ have the same tropicalization, hence the same dimension. Further, from $X \subseteq K^* \cdot X$ we obtain $K^* \cdot X = X$, which is equivalent to the ideal $I$ being homogeneous with respect to the grading $\deg(x_i) = v_i$.
	
	%\emph{Sufficiency}. If $I$ is homogeneous with respect to the grading $\deg(x_i) = v_i$, then $\iin_{w+v}(I) = \iin_{w}(I)$ for every $w \in \mathbb{R}^n$. Therefore $w \in \trop(V(I))$ if and only if $w + v \in \trop(V(I))$, that is $v$ is in the lineality space of $\trop(V(I))$.
	%\end{proof}
	
	\begin{corollary}
		The homogeneity space of a linear degenerate tropical flag variety is contained in the homogeneity space of every linear degenerate tropical flag variety in $\mathcal{L}$ that covers it.
	\end{corollary}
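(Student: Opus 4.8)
The plan is to descend from the tropical varieties to the defining ideals and compare homogeneity spaces combinatorially. Write $I_{\mathbf S}$ for the ideal generated by $\{\mathscr P_{r_i;n}\}_{1\le i\le k}\cup\{\mathscr P_{r_i,r_j;S_{ij};n}\}_{1\le i<j\le k}$, so that $\LFl(\mathbf r,\mathbf S;n)=V(I_{\mathbf S})$ and, in the sense of the paragraph preceding Lemma~\ref{lem:homogeneous lineality space}, the homogeneity space in question is $\{v:\ I_{\mathbf S}\text{ is homogeneous for the grading }\deg p_A=v_A\}$. If $\LFl(\mathbf r,\mathbf S';n)$ covers $\LFl(\mathbf r,\mathbf S;n)$ then $S_i\subseteq S_i'$ for all $i$, hence $S_{ij}\subseteq S_{ij}'$ for all $i<j$; so it suffices to prove, for any $\mathbf S,\mathbf S'$ with $S_i\subseteq S_i'$ for all $i$, that every $v$ making $I_{\mathbf S}$ homogeneous makes $I_{\mathbf S'}$ homogeneous.

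The combinatorial core is that the displayed generators of $I_{\mathbf S'}$ are obtained from those of $I_{\mathbf S}$ by deleting monomials: the Grassmann-Pl\"ucker relations $\mathscr P_{r_i;n}$ occur in both, and for fixed $I,J$ the relation $\mathscr P_{r_i,r_j;S_{ij}';n}$ is the sub-sum of $\mathscr P_{r_i,r_j;S_{ij};n}$ obtained by dropping the terms $p_{I\cup j}p_{J\setminus j}$ with $j\in S_{ij}'\setminus S_{ij}$. A $v$-homogeneous polynomial has all of its monomials in a single $v$-degree, hence so does any sub-sum of its monomials; thus the common homogeneity space of the generators of $I_{\mathbf S}$ is contained in that of the generators of $I_{\mathbf S'}$, which in turn is contained in the homogeneity space of $I_{\mathbf S'}$, since an ideal with a $v$-homogeneous generating set is $v$-homogeneous.

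What remains --- and what I expect to be the real obstacle --- is the converse inclusion for $\mathbf S$ alone: that the homogeneity space of $I_{\mathbf S}$ is contained in the common homogeneity space of its given generators. If a generator $g$ were not $v$-homogeneous, its decomposition $g=g_1+\dots+g_p$ (with $p\ge 2$) into $v$-homogeneous parts would place proper nonempty sub-sums of its monomials in $I_{\mathbf S}$, so it is enough to rule out that any proper nonempty sub-sum of a Grassmann-Pl\"ucker or linear degenerate incidence relation vanishes identically on $\LFl(\mathbf r,\mathbf S;n)$. I would attack this by exhibiting flags $(L_1,\dots,L_k)\in\LFl(\mathbf r,\mathbf S;n)$ built from the bottom up --- $L_1$ generic, each $L_{m+1}$ generic among subspaces containing $\pr_{S_m}(L_m)$ --- and checking that the only Pl\"ucker coordinates of such a flag forced to vanish are the combinatorially obvious ones, so that monomials of $g$ lying in two distinct $v$-degree classes cannot both vanish identically on this locus; for the Grassmann-Pl\"ucker generators this reduces to the classical description of the homogeneity space of the Grassmannian, and for the incidence generators one can project to the two relevant factors and argue on a two-step linear degenerate flag variety. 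An appealing alternative that bypasses this step would be to realise $I_{\mathbf S'}$ as an initial ideal $\iin_w(I_{\mathbf S})$ for a suitable weight $w$ --- linear degenerate Pl\"ucker relations arise as initial degenerations of the Pl\"ucker relations --- and use that $v$-homogeneity of an ideal is inherited by all of its initial ideals (by the fan structure of the Gr\"obner fan); the cost there is verifying that the initial ideal equals $I_{\mathbf S'}$ rather than merely containing the initial forms of the chosen generators.
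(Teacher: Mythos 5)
Your first two paragraphs are, in substance, the paper's entire proof: the paper argues exactly that, for a fixed grading of the Pl\"ucker variables, if the polynomials in $\mathscr{P}_{r,s;S;n}$ are homogeneous with respect to that grading then so are those in $\mathscr{P}_{r,s;S';n}$ for every $S'\supseteq S$, because the latter are obtained from the former by deleting the monomials indexed by $j\in S'\setminus S$, and any sub-sum of the monomials of a $v$-homogeneous polynomial is again $v$-homogeneous. On that part you and the paper coincide, down to the reduction from covers to componentwise inclusions $S_{ij}\subseteq S_{ij}'$.

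The step you single out as ``the real obstacle'' --- that $v$-homogeneity of the ideal $I_{\mathbf S}$ forces $v$-homogeneity of the listed generators --- is simply not addressed in the paper: its proof stops at the generator-level implication. In effect the paper reads the homogeneity space of a linear degenerate tropical flag variety as the set of gradings making the standard linear degenerate Pl\"ucker relations homogeneous; for the purpose the corollary serves (feeding into Lemma \ref{lem:homogeneous lineality space} to explain the observed growth of lineality spaces along covers in $\mathcal L$), only the direction ``homogeneous generators $\Rightarrow$ homogeneous ideal $\Rightarrow$ vector in the lineality space'' is ever used, so the converse you worry about never enters. If one insists on the stricter ideal-theoretic reading (homogeneity space of the generated ideal, or of the vanishing ideal of $\LFl(\mathbf r,\mathbf S;n)$), your concern is legitimate, but your write-up leaves that step as a sketch: neither of your proposed attacks (genericity of bottom-up constructed flags, or exhibiting $I_{\mathbf S'}$ as an initial ideal of $I_{\mathbf S}$) is carried out. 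So take your third paragraph as a refinement going beyond what the paper itself proves, not as a missing ingredient of its argument.
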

	\begin{proof}
		The claim follows from the structure of the linear degenerate Pl\"{u}cker relations. In fact, for a fixed grading of the Pl\"{u}cker variables, if the polynomials in $\mathscr{P}_{r,s;S;n}$ are homogeneous with respect to this grading, then polynomials in $\mathscr{P}_{r,s;S';n}$ are also homogeneous with respect to this grading for every $S' \supseteq S$.
	\end{proof}
	
	\begin{comment}
	\begin{corollary}\label{prop: generators of lineality of tropicalized flag}
	The lineality space of $\trop(LFl(r,s,S,n))$ contains the sets of vectors
	\begin{align}
	\left\{ \sum_{j \in J} e_{I \cup \{j\}} | I \in \binom{[n]}{r-1},  J \in \binom{[n]}{r+1}  \right\} \label{eq: grassmann i generator lineality} \\
	\left\{ \sum_{j \in J } e_{J \setminus \{j\}} | J \in \binom{[n]}{r+1}  \right\}  \label{eq: grassmann j generator lineality} \\
	\left\{ \sum_{ j \in J\setminus (I \cup S')} e_{I \cup \{j\}} | S' \subseteq S, I \in \binom{[n]}{r-1}, J \in \binom{[n]}{s+1}  \right\}\label{eq: i generator lineality} \\
	\left\{ \sum_{j \in J\setminus (I \cup S')} e_{J \setminus \{j\}} | S' \subseteq S,I \in \binom{[n]}{r-1},  J \in \binom{[n]}{s+1}  \right\} \label{eq: j generator lineality}
	\end{align}
	where the vectors of the form $e_I$ correspond to the grading of the Pl\"{u}cker variable $p_I$.
	\end{corollary}
	\begin{proof}
	From Lemma \ref{lem:homogeneous lineality space} it is enough to check homogeneity individually on each of the generators of the Grassmann-Pl\"{u}cker relations (for \eqref{eq: grassmann i generator lineality} and \eqref{eq: grassmann j generator lineality}) and on the linear degenerate Pl\"{u}cker relations (for \eqref{eq: i generator lineality} and \eqref{eq: j generator lineality}).
	\end{proof}
	\end{comment}

	By looking at the examples in the previous section, one might be tempted to conjecture that a cover relation on the poset implies set inclusion on the tropical varieties. In general, this is false, as the following example shows.
	
	\begin{example}
		In this example, we are going to show that
		\[ \trop \big( \LFl((1,2),\emptyset;4) \big) \nsubseteq \trop \big( \LFl((1,2),\{ 1 \};4) \big). \]
		We already described the above tropical varieties in Example \ref{ex: Fl 1,2;4} and Example \ref{ex: linear degenerate section 3}. Now, assume that our base field $K$ is the \emph{field of Laurent series} $\mathbb{K}((t))$, that is the  quotient field of the DVR $\mathbb{K}[[t]]$ of formal power series with coefficients in a field $\mathbb{K}$ in the variable $t$. Then, $K$ has valuation $v:K \rightarrow \mathbb{T}$ where $v(f(t))$ is the minimum of the exponents appearing in $f$.
		
		Now let $a,b \in \mathbb{Q}$ with $b > a > 0$, and consider the two matrices
		\[
		A_1 = \begin{pmatrix} 1 & 1 & 1 & 1 \end{pmatrix}, \quad A_2 = \begin{pmatrix} 1 & 1 & 1 &1 \\ t^a & 0 & t^b & 1 \end{pmatrix}.
		\]
		Let $L_1, L_2 \subseteq \mathbb{K}^4$ be the two linear spaces generated by the rows of the matrices $A_1$ and $A_2$ respectively. By construction, $L_1 \subseteq L_2$, but $\pr_{\{1\}}(L_1) \nsubseteq L_2$. We can see this through the Pl\"{u}cker equations computed in Example \ref{ex: linear degenerate section 3} as follows. The valuations of the Pl\"{u}cker coordinates of $L_1$ and $L_2$ are
		\begin{gather*}
			(p_1,p_2,p_3,p_4) = (0,0,0,0), \\ (p_{1,2},p_{1,3},p_{1,4},p_{2,3},p_{2,4},p_{3,4}) = (a,a,0,b,0,0).
		\end{gather*}
		In particular, the minima in all the tropical polynomials in $\mathscr{P}_{2;4}^{\trop}$ and $\mathscr{P}_{1,2;4}^{\trop}$ are achieved at least twice, while the minimum in, for instance, the second tropical polynomial of $\mathscr{P}_{1,2;\{1\};4}^{\trop}$ listed in Example \ref{ex: linear degenerate section 3}, $p_{4}p_{1,2} \oplus p_{2}p_{1,4}$ is not achieved twice:
		\[ p_{4}p_{1,2} = 0 \odot a = a > 0 = 0 \odot 0 = p_{2}p_{1,4}.\qedhere \]
	\end{example}
	
	While we do not obtain containment on tropical flag varieties or Dressians in the poset of linear degenerations, from the definition of the linear degenerate Pl\"{u}cker relations, we obtain the following containment on some boundary components.
	
	\begin{corollary}
		Let $\LFlDr(r,r',S \cup \{ s \},n) \prec \LFlDr(r,r',S,n)$ be a cover in the poset of linear degenerate flag Dressians. Set
		\[ \mathscr{B} = \left\{ (p_I) \in \mathbb{T}^{\binom{n}{r}} \times \mathbb{T}^{\binom{n}{r'}} : p_I = \infty \text{ for every } I \in \binom{[n]}{r} \text{ such that } s \in I \right\}. \]
		Then we have
		\[ \LFlDr(r,r',S,n) \cap \mathscr{B} \subseteq \LFlDr(r,r',S \cup \{ s \},n). \]
		%the rest with the lineality space is already proved in the previous results
	\end{corollary}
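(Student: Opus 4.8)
The plan is to compare the two sets of defining tropical equations. Recall that $\LFlDr(r,r',S,n)$ is cut out by $\mathscr{P}^{\trop}_{r;n}$, $\mathscr{P}^{\trop}_{r';n}$, and $\mathscr{P}^{\trop}_{r,r';S;n}$, while $\LFlDr(r,r',S\cup\{s\},n)$ is cut out by the same Grassmann--Pl\"ucker relations together with $\mathscr{P}^{\trop}_{r,r';S\cup\{s\};n}$. Since the Grassmann--Pl\"ucker relations $\mathscr{P}^{\trop}_{r;n}$ and $\mathscr{P}^{\trop}_{r';n}$ appear in both, it suffices to show: if $(p_I) \in \mathscr{B}$ satisfies all the relations in $\mathscr{P}^{\trop}_{r,r';S;n}$, then it also satisfies all the relations in $\mathscr{P}^{\trop}_{r,r';S\cup\{s\};n}$.

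The key observation is the relation between the individual polynomials. Fix $I \in \binom{[n]}{r-1}$ and $J \in \binom{[n]}{r'+1}$. The corresponding polynomial in $\mathscr{P}_{r,r';S\cup\{s\};n}$ is
\[ \bigoplus_{j \in J \setminus (I \cup S \cup \{s\})} p_{I \cup j}\, p_{J \setminus j}, \]
whereas the one in $\mathscr{P}_{r,r';S;n}$ is
\[ \bigoplus_{j \in J \setminus (I \cup S)} p_{I \cup j}\, p_{J \setminus j}, \]
so the latter has exactly one extra tropical monomial, namely $p_{I \cup s}\, p_{J \setminus s}$, which occurs precisely when $s \in J \setminus (I \cup S)$ (if $s \in I \cup S$ or $s \notin J$ the two polynomials coincide and there is nothing to prove). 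I would then split into these cases. When the polynomials coincide, membership in $V(\cdot)$ for one immediately gives it for the other. When they differ by the single monomial $p_{I \cup s}\, p_{J \setminus s}$, I use the hypothesis $(p_I) \in \mathscr{B}$: since $s \in I \cup s$, we have $p_{I \cup s} = \infty$, hence $p_{I \cup s}\, p_{J \setminus s} = \infty \odot p_{J\setminus s} = \infty$. Therefore the extra monomial contributes $\infty$ and deleting it does not change where the minimum is attained: the minimum of the shorter sum is achieved at least twice if and only if the minimum of the longer sum is (using the convention in the definition of $V(F)$ that a minimum equal to $\infty$ counts as achieved at least twice, which handles the degenerate situation where all remaining monomials are also $\infty$). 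This establishes the desired containment.

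The argument is essentially bookkeeping, so I do not expect a serious obstacle; the only point requiring a little care is the $\infty$-bookkeeping in the definition of a tropical hypersurface $V(F)$ --- one must make sure that adding or removing a monomial that evaluates to $\infty$ genuinely preserves the "minimum achieved at least twice'' condition in all edge cases, including when the whole expression is $\infty$. Since the paper's definition explicitly declares that an $\infty$-valued minimum is achieved at least twice by convention, this case is unproblematic. One should also note that $\mathscr{B}$ is defined by setting to $\infty$ the coordinates $p_I$ with $s \in I$ only among the rank-$r$ Pl\"ucker coordinates, which is exactly what is needed since $s$ is added only to the first degeneration set in the cover $S \cup \{s\}$; the rank-$r'$ coordinates $p_{J\setminus j}$ are untouched, consistent with the asymmetry of the linear degenerate relations in $I$ and $J$.
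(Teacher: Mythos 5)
Your proposal is correct and matches the paper's intended argument: the paper states this corollary without a written proof, regarding it as immediate from the definition of the linear degenerate Pl\"ucker relations, and your monomial-by-monomial comparison (the $S\cup\{s\}$ relation is the $S$ relation with the term $p_{I\cup s}\,p_{J\setminus s}$ deleted, which evaluates to $\infty$ on $\mathscr{B}$, so the ``minimum achieved at least twice'' condition is preserved) is exactly that bookkeeping, with the $\infty$-conventions handled correctly.
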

	
	\begin{comment}
	The containment of the boundary points is straightforward from the Pl\"ucker relations, and  $\mathrm{Lin}(\LFlDr(r,r',S \cup \{ s \},n)) \supseteq \mathrm{Lin}(\LFlDr(r,r',S,n)) + L$ follows by applying Lemma \ref{prop: generators of lineality of tropicalized flag}.
	\end{comment}

	Another interesting application of the poset of linear degenerate flag varieties concerns \emph{relative realizability}. We say that two realizable tropical linear spaces $T_1\subseteq T_2$ are \defemph{relatively realizable} if there exist realizations $L_1$ of $T_1$ and $L_2$ of $T_2$ such that $L_1 \subseteq L_2$. Let $\mathcal{L}$ be the poset of linear degenerate tropical flag varieties with flags of length $2$ in $\mathbb{P}(\mathbb{T}^n)$ and rank vector $(r,s)$. Then, accurately describing the cover relations of $\mathcal{L}$  provides us a way to solve the relative realizability problem: the maximal element of $\mathcal{L}$ is $\trop(G(r;n))\times \trop(G(s;n))$ in which we impose no conditions on either containment or relative realizability, whereas the minimal element $\trop(\Fl(r,s;n))$ of $\mathcal{L}$ does. Thus, if we could explicitly reconstruct $\trop(\Fl(r,s;n))$ from $\trop(G(r;n))\times \trop(G(s;n))$, we would have an explicit solution to the relative realizability problem by tracking elements in the cover relations.
	
	\bibliographystyle{plain}
	\bibliography{Reference.bib}
	
	\vspace{0.25cm}
	\noindent
	\textsc{Alessio Borz\`i, MPI MiS Leipzig, Inselstraße 22, 04103 Leipzig, Germany}\\
	\textit{Email:} alessio.borzi@mis.mpg.de 
	\vspace{0.25cm}
	
	\noindent
	\textsc{Victoria Schleis, Universit\"at T\"ubingen, Auf der Morgenstelle 10,
		72076 T\"ubingen,
		Germany} \\
	\textit{Email :} victoria.schleis@student.uni-tuebingen.de
\end{document}